\theoremstyle{plain}
\newtheorem{theorem}{Theorem}[section]
\newtheorem{lemma}[theorem]{Lemma}
\newtheorem{proposition}[theorem]{Proposition}
\theoremstyle{definition}
\newtheorem{example}[theorem]{Example}
\theoremstyle{remark}
\newtheorem*{remark}{Remark}
\numberwithin{figure}{section}
\numberwithin{table}{section}
\newcommand{\RR}{{\mathbb R}}
\newcommand{\bu}{{\mathbf u}}
\newcommand{\bX}{{\mathbf X}}
\newcommand{\cD}{{\mathcal D}}
\newcommand{\cE}{{\mathcal E}}
\newcommand{\cG}{{\mathcal G}}
\newcommand{\cH}{{\mathcal H}}
\newcommand{\cI}{{\mathcal I}}
\renewcommand{\hat}{\widehat}
\renewcommand{\tilde}{\widetilde}
\DeclareMathOperator{\diag}{diag}
\begin{document}

\title[Testing for invariance under group actions]{Application of the Cram\'er--Wold theorem to testing for invariance under group actions}

\author[Fraiman]{Ricardo Fraiman}
\address{Centro de Matem\'atica, Facultad de Ciencias, Universidad de la Rep\'ublica, Uruguay}
\email{rfraiman@cmat.edu.uy}

\author[Moreno]{Leonardo Moreno}
\address{Instituto de Estad\'{i}stica, Departamento de M\'etodos Cuantitativos, FCEA, Universidad de la Rep\'ublica, Uruguay.}
\email{mrleo@iesta.edu.uy}

\author[Ransford]{Thomas Ransford}
\address{D\'epartement de math\'ematiques et de statistique, Universit\'e Laval,
Qu\'ebec City (Qu\'ebec),  Canada G1V 0A6.}
\email{ransford@mat.ulaval.ca}

\begin{abstract}

We address the problem of testing for the invariance of a probability measure under the action of a group of linear transformations.
We propose a procedure based on consideration of one-dimensional projections, justified using a variant of the Cram\'er--Wold theorem. 
Our test procedure is powerful, computationally efficient,
and  dimension-independent, 
extending even to the case of infinite-dimensional spaces (multivariate functional data).
It includes, as special cases, tests for exchangeability and sign-invariant exchangeability. 
We compare our procedure with some previous proposals in these cases, in a small simulation study. 
The paper concludes with two real-data examples.
\end{abstract}

\keywords{Cram\'er--Wold theorem; random projection; group action; test for invariance; exchangeable distribution; sign-invariant exchangeable distribution}

\subjclass[2010]{62G10, 62H15, 60E05, 60E10}

\date{18 May 2022}

\maketitle


\section{Introduction}

Let $d\ge2$ and let $P$ be a Borel probability measure on $\RR^d$.
The \emph{image} or \emph{push-forward} of $P$ under a linear map $T:\RR^d\to\RR^d$
is the Borel probability measure $PT^{-1}$ on $\RR^d$ defined by
\[
PT^{-1}(B):=P(T^{-1}(B)).
\]
The measure $P$ is said to be \emph{$T$-invariant} if $PT^{-1}=P$.
If $G$ is a group of invertible linear self-maps of $\RR^d$, then
$P$ is \emph{$G$-invariant} if it is $T$-invariant for each $T\in G$.

For example, if $G$ is the group of $d\times d$
permutation matrices, then $P$ is $G$-invariant if and only if
it is an exchangeable distribution. If $G$ is the group of signed permutation
matrices, then $P$ is $G$-invariant if and only if it is a sign-invariant exchangeable
distribution. (These terms will be defined in detail later in the article.) 
Of course one can imagine many more examples.

The purpose of this paper is to develop a methodology for testing whether a given
probability measure $P$ on $\RR^d$ is $G$-invariant for a given group~$G$.
We know of no previous work on this subject in this degree of generality,
though certain special cases, notably testing for exchangeability or sign-invariant exchangeability,
have been extensively treated in the literature. We shall provide detailed references when we come to discuss these special cases later in the paper.

A potential obstacle is the fact that $G$ may be quite large, possibly even infinite.
Another possible difficulty is that the dimension $d$ of the underlying space may be very large.
To circumvent these problems, we exploit two devices.

The first idea is a very simple one, namely that, to test for $G$-invariance, it suffices to
test for $T$-invariance as $T$ runs through a set of generators for~$G$. The point is that
$G$ may be generated by a very small set of $T$, even though $G$ itself is large or even infinite.
This idea is explored in \S\ref{S:generators}.

The second idea is that, to test whether $P=PT^{-1}$, 
one can project $P$ and $PT^{-1}$
onto a randomly chosen one-dimensional subspace,
and test whether the projected measures are equal.
This reduces a $d$-dimensional problem to a one-dimensional one,
for which well-known techniques are available.
The justification for this procedure is a variant of the 
Cram\'er--Wold theorem. This idea is described in detail in \S\ref{S:CW}.

Based on these two ideas, we develop a testing procedure in \S\ref{S:testing}.
Under suitable hypotheses on $P$ and $G$, our test is consistent and distribution-free.
We also consider a variant of this procedure, based on projecting onto several randomly-chosen one-dimensional
subspaces. This has the effect of increasing the power of the test.

There follows a discussion of three special cases.
The case of exchangeable distributions is treated in \S\ref{S:exch}
and that of sign-invariant exchangeable distributions in \S\ref{S:signexch}.
We describe the background, perform some simulations, and,
in the case of exchangeability, we compare our results with those
obtained by other techniques in the literature. 

The third case, treated in \S\ref{S:infdim},
illustrates the flexibility of our method. In this case,
$\RR^d$ is replaced by an infinite-dimensional Hilbert space.
The necessary adjustments to our method are described,
and illustrated with a further simulation. 

The article concludes  with  two  examples drawn from real datasets,
one  concerning biometric measurements, and the other satellite images.


\section{Generators}\label{S:generators}

Recall that a group $G$ is said to be \emph{generated} by a subset $A$ if 
every element $g$ of $G$ can be written as a finite product $g=g_1\cdots g_m$,
such that, for each $j$, either $g_j\in A$ or $g_j^{-1}\in A$.
Equivalently, $G$ is generated by $A$ if $A$ is contained in no proper subgroup of $G$.

In our context, the interest of this notion stems from the following simple proposition.

\begin{proposition}
Let $P$ be a Borel probability measure on $\RR^d$,
let $G$ be a group of invertible linear maps $T:\RR^d\to\RR^d$,
and let $A$ be a set of generators for $G$.
Then $P$ is $G$-invariant if and only if $P$ is $T$-invariant for each $T\in A$.
\end{proposition}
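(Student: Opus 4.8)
The plan is to reduce everything to a single structural observation: the collection of linear maps under which $P$ is invariant forms a subgroup of $G$. Once this is in hand, the generator hypothesis does all the remaining work.

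The forward implication is immediate. Since $A\subseteq G$, if $P$ is $G$-invariant then it is in particular $T$-invariant for every $T\in A$. So the content is entirely in the converse, and there I would proceed as follows. Introduce the stabilizer $H:=\{T\in G: PT^{-1}=P\}$ and aim to show that $H$ is a subgroup of $G$; the hypothesis is precisely that $A\subseteq H$. To verify the subgroup axioms I would first record the functoriality of the push-forward: for invertible linear maps $S,T$ one has $(PT^{-1})S^{-1}=P(ST)^{-1}$, because $(ST)^{-1}=T^{-1}S^{-1}$. Consequently, if $S,T\in H$ then $P(ST)^{-1}=(PT^{-1})S^{-1}=PS^{-1}=P$, so $ST\in H$ and $H$ is closed under composition. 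The identity map clearly lies in $H$.

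The one step that needs a small argument is closure under inverses. Here I would unwind the definition: $T\in H$ means $P(T^{-1}(B))=P(B)$ for every Borel set $B$. Substituting $B\mapsto T(B)$ gives $P(B)=P(T(B))$ for every $B$, which is exactly the statement $P(T^{-1})^{-1}=P$, i.e. $T^{-1}\in H$. Thus $H$ is indeed a subgroup.

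Finally I would invoke the characterization of generation recalled just before the statement, namely that $A$ is contained in no proper subgroup of $G$. Since $A\subseteq H$ and $H$ is a subgroup, this forces $H=G$, so $P$ is $T$-invariant for every $T\in G$; that is, $P$ is $G$-invariant. The only delicate point is the verification of inverse-closure via the change of variable $B\mapsto T(B)$; the rest follows mechanically from the multiplicativity of the push-forward under composition, so I do not expect any genuine obstacle.
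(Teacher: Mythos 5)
Your proof is correct and follows the same route as the paper: both define the stabilizer $\{T\in G: PT^{-1}=P\}$, verify it is a subgroup containing $A$, and conclude it equals $G$ because $A$ generates $G$. You merely spell out the subgroup verification (multiplicativity of the push-forward and the change of variable $B\mapsto T(B)$ for inverse-closure) that the paper leaves as ``easily checked.''
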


\begin{proof}
Define
\[
G_0:=\{T\in G: PT^{-1}=P\}.
\]
Obviously $G_0$ contains the identity. Also, it is easily checked that,
if $T_1,T_2\in G_0$, then also $T_1T_2\in G_0$ and $T_1^{-1}\in G_0$.
Therefore $G_0$ is a subgroup of $G$. By assumption, $G_0$ contains $A$.
As $A$ generates $G$, it follows that $G_0=G$.
\end{proof}

\begin{example}\label{Ex:2gen}
Let $G$ be the group of $d\times d$ permutation matrices
(i.e.\ matrices with one entry $1$ in each row and each column, and zeros elsewhere).
Thus each $T\in G$ permutes the basis vectors of $\RR^d$,
say $Te_j=e_{\sigma(j)}$,
where $\sigma$ is permutation of $\{1,2,\dots,d\}$.
The correspondence $T\leftrightarrow\sigma$ is an isomorphism
between $G$ and $\Sigma_d$, the group of permutations of $\{1,2,\dots,d\}$.
It is well known that, even though $\Sigma_d$ 
contains $d!$ permutations, it can be generated using just two permutations,
for example the transposition $\sigma_1=(1,2)$ and the cycle $\sigma_2=(1,2,3,4,...,d)$
(see e.g.\ \cite[Example~2.30]{Ro78}). 
Thus $G$ has a generating set consisting of just two matrices.
\end{example}

\begin{example}\label{Ex:sign}
Let $G$ be the group of $d\times d$ signed permutation matrices
(i.e.\ matrices with one entry $\pm 1$ in each row and each column, and zeros elsewhere).
This is sometimes called the hyperoctahedral group,  denoted~$B_d$. 
It is the group of symmetries of the $d$-dimensional cube. It contains $d!2^d$ elements,
but, like the symmetric group $\Sigma_d$, it can be generated by just two elements.
However, these elements are more complicated to describe,
see \cite[Proposition~6]{Ja04}.
On the other hand, it is easy to give a set of three generators:
for example, one can take $\{T_1,T_2,T_3\}$, 
where $T_1,T_2$ are the matrices corresponding to the same two permutations $\sigma_1,\sigma_2$ as before,
and $T_3$ is the diagonal matrix given by $T_3:=\diag(-1,1,1,\dots,1)$.
\end{example}


\section{Cram\'er--Wold theorems}\label{S:CW}

In this section we recall the Cram\'er--Wold theorem, as well as a more recent extension.
We also discuss the sharpness of this result in the context testing for $G$-invariance.

Let $P,Q$ be Borel probability measures on $\RR^d$, where $d\ge2$.
We denote by $\cE(P,Q)$ the set of vectors $x\in\RR^d$
such that $P\pi_x^{-1}=Q\pi_x^{-1}$, where $\pi_x:\RR^d\to\RR^d$ is the orthogonal projection onto the one-dimensional subspace $\RR x$ spanned by $x$.
Equivalently, $\cE(P,Q)$ is the set of $x\in\RR^d$ such that $\phi_P(tx)=\phi_Q(tx)$ for all $t\in\RR$,
where $\phi_P,\phi_Q$ denote the characteristic functions of $P,Q$ respectively.
The set $\cE(P,Q)$ is a closed cone (not necessarily convex) in $\RR^d$. 
For detailed proofs of all these facts, see \cite[\S2]{CFR07}.

The following result is a restatement in our notation of a 
well-known theorem of Cram\'er and Wold (\cite{CW36}).

\begin{theorem}[\protect{\cite[Theorem~I]{CW36}}]\label{T:CW}
Let $P,Q$ be Borel probability measures on~$\RR^d$, where $d\ge2$.
If $\cE(P,Q)=\RR^d$, then $P=Q$
\end{theorem}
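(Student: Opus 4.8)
The plan is to reduce the statement to the injectivity of the Fourier transform on probability measures. First I would unpack the hypothesis: by the definition recalled just above, $x\in\cE(P,Q)$ means $\phi_P(tx)=\phi_Q(tx)$ for all $t\in\RR$, so the assumption $\cE(P,Q)=\RR^d$ says that every $x$ lies in $\cE(P,Q)$; taking $t=1$ for each such $x$ gives $\phi_P(x)=\phi_Q(x)$ for all $x\in\RR^d$. Thus $P$ and $Q$ have identical characteristic functions, and everything comes down to showing that a Borel probability measure on $\RR^d$ is determined by its characteristic function.

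To prove that, I would use a smoothing (approximate-identity) argument. For $\sigma>0$, let $\gamma_\sigma$ denote the centred Gaussian measure on $\RR^d$ with covariance $\sigma^2 I$. The convolution $P*\gamma_\sigma$ is absolutely continuous, and its density can be recovered from $\phi_P$ by the Fourier inversion formula: because the factor $e^{-\sigma^2|x|^2/2}$ is integrable, Fubini's theorem justifies writing the density of $P*\gamma_\sigma$ at $y$ as $(2\pi)^{-d}\int_{\RR^d} e^{-i\langle x,y\rangle}\phi_P(x)\,e^{-\sigma^2|x|^2/2}\,dx$. This expression depends on $P$ only through $\phi_P$, so the equality $\phi_P=\phi_Q$ forces $P*\gamma_\sigma$ and $Q*\gamma_\sigma$ to have the same density, hence $P*\gamma_\sigma=Q*\gamma_\sigma$ for every $\sigma>0$.

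Finally I would let $\sigma\to0^+$. Since $\gamma_\sigma$ converges weakly to the point mass at the origin, $P*\gamma_\sigma\to P$ and $Q*\gamma_\sigma\to Q$ weakly; as weak limits are unique, $P=Q$, which is the desired conclusion.

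The reduction in the first paragraph is trivial; the real content is the Fourier-uniqueness step. I expect the main obstacle to be the justification of the inversion formula, namely interchanging the order of integration to express the smoothed density in terms of $\phi_P$, and verifying that the resulting function is genuinely the density of $P*\gamma_\sigma$. The Gaussian regularization is precisely what makes all the integrals absolutely convergent, so that Fubini and dominated convergence apply cleanly, but this is where the care is needed. The weak-convergence step is a standard approximate-identity fact and should present no difficulty.
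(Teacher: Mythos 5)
Your proof is correct. The paper does not actually prove this statement---it simply cites it as Theorem~I of Cram\'er and Wold \cite{CW36}---but your argument is the standard one and matches the framing the paper itself sets up: it already records that $x\in\cE(P,Q)$ is equivalent to $\phi_P(tx)=\phi_Q(tx)$ for all $t\in\RR$, so the hypothesis $\cE(P,Q)=\RR^d$ immediately gives $\phi_P\equiv\phi_Q$, and the Gaussian-smoothing proof of the uniqueness theorem for characteristic functions (Fourier inversion for $P*\gamma_\sigma$ via Fubini, then $\sigma\to0^+$) is complete as you describe it.
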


There are several extensions of this theorem,
in which one assumes more about the nature of the measures $P,Q$ 
and less about the size of $\cE(P,Q)$.
Articles on this subject include those of R\'enyi \cite{Re52},
Gilbert \cite{Gi55}, Heppes \cite{He56}, B\'elisle--Mass\'e--Ransford \cite{BMR97}
and Cuesta-Albertos--Fraiman--Ransford \cite{CFR07}.
We cite one such result, taken from \cite{CFR07}.

\begin{theorem}[\protect{\cite[Corollary~3.2]{CFR07}}]\label{T:CWgen}
Let $P,Q$ be Borel probability measures on~$\RR^d$, where $d\ge2$. Assume that
the absolute moments $m_N:=\int\|x\|^N\,dP(x)$ are all finite and satisfy 
\begin{equation}\label{E:Carleman}
\sum_{N\ge1}m_N^{-1/N}=\infty.
\end{equation}
If the set $\cE(P,Q)$ is of positive Lebesgue measure in $\RR^d$,
then $P=Q$.
\end{theorem}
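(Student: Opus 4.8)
The plan is to translate the geometric hypothesis about one-dimensional projections into a purely algebraic statement about moments, and then to close the argument with a multivariate moment-determinacy theorem. First I would unpack the definition of $\cE(P,Q)$: for $x\in\cE(P,Q)$ we have $\phi_P(tx)=\phi_Q(tx)$ for all $t\in\RR$, which says exactly that the real random variable $\langle Y,x\rangle$ has the same law under $P$ as under $Q$. Since $\cE(P,Q)$ is a cone of positive Lebesgue measure, it cannot be contained in any hyperplane (a hyperplane being Lebesgue-null), so it spans $\RR^d$ and in particular contains a basis $x_1,\dots,x_d$. Because $P$ has all absolute moments finite, so does each projected law; matching of the laws then gives $\int|\langle y,x_i\rangle|^N\,dQ(y)<\infty$ for every $i$ and $N$, and bounding $\|y\|$ by a constant times $\sum_i|\langle y,x_i\rangle|$ shows that $Q$ too has all absolute moments finite. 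This preliminary is essential, since otherwise the moments of $Q$ need not even be defined.

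Next I would compare moments directly. For $x\in\cE(P,Q)$ and each $N$, equality of the laws of $\langle Y,x\rangle$ forces $\int\langle y,x\rangle^N\,dP(y)=\int\langle y,x\rangle^N\,dQ(y)$. Expanding by the multinomial theorem, $\langle y,x\rangle^N=\sum_{|\alpha|=N}\binom{N}{\alpha}x^\alpha y^\alpha$, so the difference of the two integrals is a homogeneous polynomial of degree $N$ in the variable $x$, whose coefficients are the scaled differences $\binom{N}{\alpha}(\mu_\alpha^P-\mu_\alpha^Q)$ of the multivariate moments $\mu_\alpha^P:=\int y^\alpha\,dP(y)$ and $\mu_\alpha^Q:=\int y^\alpha\,dQ(y)$. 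This polynomial vanishes on all of $\cE(P,Q)$. Since the zero set of a nonzero polynomial on $\RR^d$ has Lebesgue measure zero, while $\cE(P,Q)$ has positive measure, the polynomial must be identically zero; hence $\mu_\alpha^P=\mu_\alpha^Q$ for every multi-index $\alpha$. Thus $P$ and $Q$ share their entire moment sequence.

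Finally I would conclude by determinacy. The hypothesis \eqref{E:Carleman} is precisely the (radial) multivariate Carleman condition, which guarantees that $P$ is the unique Borel probability measure possessing its moment sequence. Since $Q$ has finite moments, and these coincide with those of $P$, determinacy gives $Q=P$.

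The main obstacle is this last step: the passage from equality of all moments to equality of the measures. Equal moments are not enough in general, because the multivariate moment problem may be indeterminate, and condition \eqref{E:Carleman} is exactly what excludes this; the determinacy theorem it invokes is the genuine analytic content, whereas the reduction to moments via the polynomial-vanishing argument is elementary. A secondary point deserving care is the verification that $Q$ inherits finite moments from $P$, which is what makes the comparison of moment sequences legitimate in the first place.
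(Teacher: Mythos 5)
Your argument is correct and is essentially the proof of the cited result \cite[Corollary~3.2]{CFR07} itself (the paper quotes this theorem without proof): one transfers finiteness of moments to $Q$, shows all mixed moments of $P$ and $Q$ agree because a homogeneous polynomial vanishing on a set of positive Lebesgue measure is identically zero, and concludes by Carleman-type moment determinacy. The only cosmetic difference is that the source finishes by applying the one-dimensional Carleman criterion to every projection and then the classical Cram\'er--Wold theorem, whereas you invoke multivariate determinacy directly from the radial condition \eqref{E:Carleman}; both routes are valid (the radial condition implies the coordinate-wise Carleman condition by log-convexity of the moment sequence).
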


The moment condition \eqref{E:Carleman} is slightly less restrictive than demanding that the moment generating function of $P$ be finite.
Just how essential is this condition?
The brief answer is that, without it, Theorem~\ref{T:CWgen} breaks down dramatically.
Indeed, given a moment sequence $(m_N)$ that fails to satisfy \eqref{E:Carleman},
one can find probability measures $P,Q$ whose moments are bounded by $m_N$,
and $\cE(P,Q)$ has positive measure (indeed it even has non-empty interior),
yet $P\ne Q$.
See \cite{CFR07} for  an extensive discussion of this topic.

However, we are eventually going to apply Theorem~\ref{T:CWgen} in the special case when 
$Q=PT^{-1}$, where $T:\RR^d\to\RR^d$ is an invertible linear map.
It is \emph{a priori} conceivable that Theorem~\ref{T:CWgen}  might be improvable
for pairs of measures $(P,Q)$ of this particular form.
The following sharpness result, which we believe to be new, shows that this is not so,
at least in the case when $T$ is of finite order.

\begin{theorem}\label{T:sharpexch}
Let $T:\RR^d\to\RR^d$ be a linear map
such that $T\ne I$ but $T^n=I$ for some $n\ge2$.
Let $(M_N)_{N\ge0}$ be a positive sequence satisfying
\[
M_0=1,
\quad
M_N^2\le M_{N-1}M_{N+1}~(N\ge1)
\quad\text{and}\quad
\sum_{N\ge1}M_N^{-1/N}<\infty.
\]
Then there exists  a Borel probability measure $P$ on $\RR^d$ such that
\begin{itemize}
\item  $\int\|x\|^N\,dP(x)\le M_N$ for all $N\ge0$,
\item the cone $\cE(P,PT^{-1})$ is of positive measure in $\RR^d$, but
\item $PT^{-1}\ne P$.
\end{itemize}
\end{theorem}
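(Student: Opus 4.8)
The plan is to exploit the \emph{failure} of Carleman's condition $\sum_N M_N^{-1/N}<\infty$, which for a log-convex sequence is precisely a non-quasi-analyticity condition, in order to manufacture a measure whose one-dimensional projections cannot ``see'' a genuine asymmetry. Throughout, write $S:=T^*$ and observe that for any Borel probability measure $P$ one has $x\in\cE(P,PT^{-1})$ if and only if $\phi_P(tx)=\phi_P(tSx)$ for all $t\in\RR$ (since $\phi_{PT^{-1}}(y)=\phi_P(Sy)$); equivalently, with $Y\sim P$, the law of $\langle Y,x\rangle$ equals that of $\langle Y,Sx\rangle$. So I must build $P$ with $\langle Y,x\rangle$ and $\langle Y,Sx\rangle$ equal in law for $x$ ranging over a set of positive measure, while $TY$ is not equal in law to $Y$.

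First I would reduce to a planar problem. Since $T^n=I$, the form $\langle u,v\rangle_*:=\sum_{k=0}^{n-1}\langle T^ku,T^kv\rangle$ is a $T$-invariant inner product, $T$ is orthogonal for $\langle\cdot,\cdot\rangle_*$, and $\RR^d$ splits into $T$-invariant subspaces on which $T$ acts as $\pm1$ or as a planar rotation. As $T\ne I$, I choose a two-dimensional invariant plane $\Pi$, spanned by a $\langle\cdot,\cdot\rangle_*$-orthonormal pair, on which $A:=T|_\Pi$ is a nontrivial element of $O(2)$: a rotation through some $\beta\in(0,2\pi)$, or a reflection (when the only nontrivial block is a single $-1$ eigenvalue I pair it with a fixed direction, making $A$ a reflection). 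I then seek $P=P_0\otimes\delta_0$ with $P_0$ supported on $\Pi\cong\RR^2$. A direct check gives $PT^{-1}=(P_0A^{-1})\otimes\delta_0$, shows that $\langle Y,x\rangle$ depends only on the $\Pi$-component $x_\Pi$, and that the $\Pi$-component of $Sx$ is $A^{-1}x_\Pi$; hence $\cE(P,PT^{-1})\supseteq\cE(P_0,P_0A^{-1})\times\Pi^\perp$. So it suffices to produce a planar probability measure $P_0$, with Euclidean moments at most $M_N$, such that $P_0A^{-1}\ne P_0$ but $\cE(P_0,P_0A^{-1})$ contains a cone over an arc of positive length. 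The only cost of restricting to $\Pi$ is that the ambient norm and $\|\cdot\|_*$ differ by a factor $c^{\pm N}$ on moments; since $(M_N/c^N)$ is again positive, log-convex, normalised and Carleman-failing, I simply run the planar construction for this rescaled sequence.

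The engine is a non-quasi-analytic bump. The classical equivalence $\sum_N M_N^{-1/N}<\infty \iff \sum_N M_N/M_{N+1}<\infty$ for log-convex sequences places us on the non-quasi-analytic side of the Denjoy--Carleman theorem, so there is a nonzero Hermitian ($u(-\xi)=\overline{u(\xi)}$) function $u\in C^\infty(\RR^2)$, compactly supported, with $\|\partial^\alpha u\|_{L^1}\le CR^{|\alpha|}M_{|\alpha|}$. I would place $\mathrm{supp}(u)$ in a thin annular sector $K$ that (i) avoids a neighbourhood of the origin, so $u(0)=0$; (ii) is disjoint from the solid cone over a chosen arc $C$ and from all its images under the orbit $\{A^j\}$; and (iii) is not $A$-invariant. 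Setting $\Psi:=\check u$ yields a real finite signed measure with $\Psi(\RR^2)=u(0)=0$, moments controlled by the derivative bounds on $u$, and, because $u$ vanishes along every ray in the orbit of $C$, projections onto each direction in $C$ equal to zero; the Hermitian and orbit conditions force both $\Psi$ and $\Psi A^{-1}$ to project trivially there. Taking $P_0:=B+\varepsilon\Psi$ with $B$ isotropic (hence $A$-invariant), the invariance of $B$ makes $P_0$ and $P_0A^{-1}$ agree on the cone over $C$, while the non-$A$-invariance of $\Psi$ keeps $P_0A^{-1}\ne P_0$. This is the mechanism that pins the pair to the required form $(P_0,P_0A^{-1})$.

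The main obstacle is the simultaneous demand of \emph{nonnegativity} and the moment bound, which pull against each other exactly at the borderline dictated by $(M_N)$. By Paley--Wiener $\Psi=\check u$ cannot be compactly supported (a nonzero real-analytic function cannot vanish on an open cone), and its tails decay no faster than $e^{-\omega(\|y\|)}$, where $\omega(t)=\sup_N\log(t^N/M_N)$ is the associated function of the class; meanwhile $\int\|y\|^N\,dP_0\le M_N$ forces $B$ to have tails no heavier than this same rate. Thus $B$ must dominate $\varepsilon|\Psi|$ pointwise precisely at the critical decay scale. I would resolve this by taking $B$ isotropic with radial density comparable to $e^{-\omega(\|y\|)}$ (the heaviest tail the moment bound permits, whose moments match $(M_N)$ up to constants by a Laplace-type estimate), arranging a small margin in $\omega$ between $B$ and $\Psi$, and then choosing $\varepsilon$ small enough to secure $B+\varepsilon\Psi\ge0$ and to bring the total moments under $M_N$. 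Carleman's failure is indispensable here: were $\sum_N M_N^{-1/N}=\infty$, Theorem~\ref{T:CWgen} itself would forbid any such $P_0\ne P_0A^{-1}$, so the construction is sharp by design. This tail-balancing is the technical heart, to be carried out along the lines of the explicit examples in \cite{BMR97,CFR07}, modified so that the coincidence cone is disjoint from the $A$-orbit.
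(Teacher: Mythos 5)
Your route is genuinely different from the paper's. The paper never analyses the spectral structure of $T$: its key step is Lemma~\ref{L:sharpexch}, which shows that for \emph{arbitrary} probability measures $Q,R$ and any $T$ with $T^n=I$, the mixture $P:=\frac1n\bigl(Q+\sum_{j=1}^{n-1}RT^{-j}\bigr)$ satisfies $\cE_0\cap T^{-1}(\cE_0)\subset\cE(P,PT^{-1})\subset\RR^d\setminus(\cE_0\bigtriangleup T^{-1}(\cE_0))$ with $\cE_0=\cE(Q,R)$. It then feeds in, verbatim, the pair $(Q,R)$ supplied by \cite[Theorem~5.4]{BMR97} (mutually singular, moments bounded by $M_N$, $\cE_0$ containing all lines missing a ball off the origin); the right-hand inclusion gives $P\ne PT^{-1}$ for free, and only a trivial rescaling of $(M_N)$ is needed at the end. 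You instead reduce to a $T$-invariant plane and build $P_0=B+\varepsilon\Psi$ with an $A$-invariant background and a Fourier-designed perturbation. This is a legitimate and in some ways more transparent mechanism, but it forces you to re-open the \cite{BMR97} black box rather than cite it: you need a modified bump $u$ with prescribed support and $L^1$-derivative bounds, plus moment estimates for $\check u$ and for the matched background $B$. Note in particular that a density $\asymp e^{-\omega(\|y\|)}$ naturally yields moments of order $M_{N+d+1}$, not $M_N$, and log-convexity alone does not give $M_{N+k}\le C^NM_N$; closing that gap requires the sequence regularization that \cite{BMR97} perform internally. So your ``technical heart'' is pointed at rather than carried out, whereas the paper's lemma lets it invoke the hard analysis as a finished theorem.

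There is also one concrete failure mode in your mechanism: the case $A=-I$, which is unavoidable when $T=-I$ on $\RR^d$ (then every invariant plane gives $A=-I$ and there is no $+1$-eigenvector to pair with). Hermitian symmetry $u(-\xi)=\overline{u(\xi)}$ forces $\operatorname{supp}(u)=-\operatorname{supp}(u)$, so the support cannot lie in a single thin sector and is automatically $A$-invariant; your condition (iii) is then unsatisfiable and the support-based argument for $\Psi A^{-1}\ne\Psi$ collapses. The repair is to argue through values rather than supports: $\Psi(-I)^{-1}=\Psi$ iff $u$ is even iff, given Hermitian symmetry, $u$ is real-valued, so you must take $u$ with a nonzero odd imaginary part. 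This is easy to arrange, but as written your proof does not cover $T=-I$. With that fix and the deferred tail/moment estimates actually executed, your argument should go through as an alternative to the paper's.
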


\begin{remark}
The condition that $T^n=I$ for some $n\ge 2$ is automatically satisfied
if $T$ belongs to a finite group $G$, as will be the case in all the examples that we 
shall study. The article \cite{Ko03} contains a criterion for when $T^n=I$.
\end{remark}

The main new idea in Theorem~\ref{T:sharpexch} is the construction given in 
the following lemma.

\begin{lemma}\label{L:sharpexch}
Let $Q,R$ be Borel probability measures on $\RR^d$.
Let $T:\RR^d\to\RR^d$ be a linear map such that $T^n=I$ for some $n\ge2$.
Define
\[
P:=\frac{1}{n}\Bigl(Q+\sum_{j=1}^{n-1}RT^{-j}\Bigr).
\]
Then $P$ is a Borel probability measure on $\RR^d$ and, writing $\cE_0:=\cE(Q,R)$, we have
\begin{equation}\label{E:sharpexch}
\cE_0\cap T^{-1}(\cE_0)\subset \cE(P,PT^{-1})\subset \RR^d\setminus (\cE_0\bigtriangleup  T^{-1}(\cE_0)).
\end{equation}
\end{lemma}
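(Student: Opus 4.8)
The plan is to reduce the whole statement to the behaviour of the one-dimensional projections of the single signed measure $\mu := Q - R$. That $P$ is a Borel probability measure is immediate: each $RT^{-j}$ is the push-forward of the probability measure $R$ under the invertible linear (hence Borel) map $T^j$, so is itself a probability measure, and $P$ is the average of the $n$ probability measures $Q, RT^{-1}, \dots, RT^{-(n-1)}$. The real work is an algebraic identity. I would compute $PT^{-1}$ termwise and use $T^n=I$ (so that $RT^{-n}=R$):
\[
PT^{-1} = \frac1n\Bigl(QT^{-1} + \sum_{j=1}^{n-1}RT^{-(j+1)}\Bigr) = \frac1n\Bigl(QT^{-1} + R + \sum_{k=2}^{n-1}RT^{-k}\Bigr).
\]
Subtracting this from $P$, the terms $RT^{-2},\dots,RT^{-(n-1)}$ all cancel, leaving the clean telescoped identity
\[
P - PT^{-1} = \frac1n\bigl(\mu - \mu T^{-1}\bigr).
\]

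Next I would translate the three sets in \eqref{E:sharpexch} into conditions on projected signed measures. Since projection of a signed measure is linear, $x\in\cE(P,PT^{-1})$ if and only if $(P-PT^{-1})\pi_x^{-1}=0$, i.e.\ $\mu\pi_x^{-1}=(\mu T^{-1})\pi_x^{-1}$. Here $\mu\pi_x^{-1}=Q\pi_x^{-1}-R\pi_x^{-1}$ vanishes exactly when $x\in\cE_0$, by definition of $\cE_0=\cE(Q,R)$. The remaining ingredient is to show that $(\mu T^{-1})\pi_x^{-1}$ vanishes exactly when $x\in T^{-1}(\cE_0)$; for this I would rewrite $(\mu T^{-1})\pi_x^{-1}=\mu(\pi_x T)^{-1}$ and track how $\pi_x\circ T$ acts, so that vanishing of this projection becomes membership of the appropriate image of $x$ in $\cE_0$. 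This step — correctly placing the linear map (and its adjoint) when passing between push-forwards and orthogonal projections — is the one genuinely delicate point, and the place where the precise transform $T^{-1}(\cE_0)$ has to be pinned down; everything else is formal.

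With those three reformulations in hand, both inclusions are pure set logic. If $x\in\cE_0\cap T^{-1}(\cE_0)$, then both $\mu\pi_x^{-1}$ and $(\mu T^{-1})\pi_x^{-1}$ vanish, hence are equal, so $x\in\cE(P,PT^{-1})$; this is the left-hand inclusion. For the right-hand inclusion I would argue by contraposition: if $x\in\cE_0\bigtriangleup T^{-1}(\cE_0)$, then exactly one of $\mu\pi_x^{-1}$ and $(\mu T^{-1})\pi_x^{-1}$ is zero while the other is nonzero, so the two cannot coincide and $x\notin\cE(P,PT^{-1})$. Thus $\cE(P,PT^{-1})$ is disjoint from $\cE_0\bigtriangleup T^{-1}(\cE_0)$, giving the claimed containment in $\RR^d\setminus(\cE_0\bigtriangleup T^{-1}(\cE_0))$. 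The main obstacle is therefore not the inclusions themselves but the bookkeeping in the middle paragraph; once the vanishing set of $(\mu T^{-1})\pi_x^{-1}$ is correctly identified, the result follows mechanically.
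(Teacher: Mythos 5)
Your overall architecture coincides with the paper's: you derive the telescoped identity $P-PT^{-1}=\frac{1}{n}\bigl(\mu-\mu T^{-1}\bigr)$ with $\mu:=Q-R$ (the paper obtains it by noting that $\tilde P:=\sum_{j=0}^{n-1}RT^{-j}$ is $T$-invariant, you by termwise cancellation --- same identity), you reduce membership in the three sets of \eqref{E:sharpexch} to the vanishing or non-vanishing of the two projected signed measures $\mu\pi_x^{-1}$ and $(\mu T^{-1})\pi_x^{-1}$, and you finish with the same two lines of set logic. All of that is sound and matches the paper's proof.

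The gap is that you never carry out the one computation the lemma actually turns on: identifying the set of $x$ for which $(\mu T^{-1})\pi_x^{-1}=0$. You correctly single this out as the delicate point and then only say you ``would rewrite $(\mu T^{-1})\pi_x^{-1}=\mu(\pi_xT)^{-1}$ and track how $\pi_x\circ T$ acts,'' without committing to the answer; but until that set is pinned down, neither inclusion in \eqref{E:sharpexch} is established, since you cannot match anything against $T^{-1}(\cE_0)$. The paper closes this step with characteristic functions: $x\in\cE(P,PT^{-1})$ iff $\phi_\mu(tx)-\phi_{\mu T^{-1}}(tx)=0$ for all $t\in\RR$, and $\phi_{\mu T^{-1}}(y)=\int e^{i\langle y,Tw\rangle}\,d\mu(w)=\phi_\mu(T^*y)$, so the second term vanishes for all $t$ exactly when $T^*x\in\cE_0$. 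Note that doing the bookkeeping honestly therefore yields the set $(T^*)^{-1}(\cE_0)=\{x:T^*x\in\cE_0\}$; the paper's displayed equivalence writes $\phi_\mu(tTx)$ at this point and so lands on $T^{-1}(\cE_0)$, and the two agree only when $T$ is self-adjoint (for orthogonal $T$ one gets $T(\cE_0)$ rather than $T^{-1}(\cE_0)$). The discrepancy is immaterial for the application in Theorem~\ref{T:sharpexch}, where one only needs the left-hand set to have positive measure and the symmetric difference to be nonempty, with the ball $B$ chosen accordingly; but it is exactly the point your proposal defers, so you should settle it explicitly rather than leave it as a promissory note.
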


\begin{proof}
Clearly $P$ is a Borel probability measure on $\RR^d$.
Also, since $T^n=I$, the measure $\tilde{P}:=\sum_{j=0}^{n-1}RT^{-j}$ is $T$-invariant, 
so, since $P=(1/n)(Q-R+\tilde{P})$, we get
\[
P-PT^{-1}=\frac{1}{n}\Bigl(Q-R-(Q-R)T^{-1})\Bigr).
\]
Using the characterization of $\cE$ in terms of characteristic functions, it follows that
\begin{equation}\label{E:phiequiv}
\begin{aligned}
x\in\cE(P,PT^{-1})          
&\iff \phi_{P-PT^{-1}}(tx)=0 \quad\forall t\in\RR\\
&\iff \phi_{(Q-R)}(tx)-\phi_{(Q-R)T^{-1}}(tx)=0 \quad\forall t\in\RR\\
&\iff \phi_{(Q-R)}(tx)-\phi_{(Q-R)}(tTx)=0 \quad\forall t\in\RR.
\end{aligned}
\end{equation}

To prove the first inclusion in \eqref{E:sharpexch}, 
let $x\in\cE_0\cap T^{-1}(\cE_0)$. 
Then both $x,Tx\in\cE_0=\cE(Q,R)$, 
so both $\phi_{(Q-R)}(tx)=0$ and $\phi_{(Q-R)}(tTx)=0$ for all $t\in\RR$. 
By the equivalence \eqref{E:phiequiv}, it follows that $x\in\cE(P,PT^{-1})$. 
This establishes the first inclusion.

For the second inclusion in \eqref{E:sharpexch},
let $x\in\cE_0\bigtriangleup  T^{-1}(\cE_0)$.
Then exactly one of $x$ and $Tx$ lies in $\cE_0=\cE(Q,R)$,
so there exists $t\in\RR$ such that exactly one of $\phi_{(Q-R)}(tx)$
and $\phi_{(Q-R)}(tTx)$ is zero. 
Their difference is therefore non-zero, 
so, by the equivalence \eqref{E:phiequiv} again, 
it follows that $x\notin \cE(P,PT^{-1})$. 
This establishes the second inclusion and completes the proof of the lemma.
\end{proof}

\begin{proof}[Proof of Theorem~\ref{T:sharpexch}]
Let $B$ be a closed ball in $\RR^d$ such that $0\notin B$.
By \cite[Theorem~5.4]{BMR97}, 
there exist mutually singular Borel probability measures $Q,R$ on $\RR^d$
such that both $\int\|x\|^N\,dQ(x)\le M_N$ and $\int\|x\|^N\,dR(x)\le M_N$ for all $N\ge0$,
and also such that $\cE_0:=\cE(Q,R)$ contains all lines not meeting $B$.
Since $T\ne I$,
by choosing $B$ appropriately we may ensure that $\cE_0\ne T^{-1}(\cE_0)$
and also that $\cE_0\cap T^{-1}(\cE_0)$ has positive Lebesgue measure.

Let $P$ be the probability measure constructed from $Q,R$ in Lemma~\ref{L:sharpexch}.
By the left-hand inclusion in \eqref{E:sharpexch}, $\cE(P,PT^{-1})$ has positive Lebesgue measure.
Also, by the right-hand inclusion, $\cE(P,PT^{-1})$ is a proper subset of~$\RR^d$, so $P\ne PT^{-1}$.

It remains to check that $P$ satisfies the moment inequality. 
From the definition of $P$ in Lemma~\ref{L:sharpexch}, we have
\begin{align*}
\int \|x\|^N\,dP(x)
&=\frac{1}{n}\Bigl(\int \|x\|^N\,dQ(x)+\sum_{j=1}^{n-1}\int\|T^j(x)\|^N\,dR(x)\Bigr)\\
&\le \frac{1}{n}\Bigl(M_N+\sum_{j=1}^{n-1}\|T\|^{jN}M_N\Bigr)\le \|T\|^{nN}M_N.
\end{align*}
This is almost what we want. 
Repeating the argument with $(M_N)_{N\ge0}$ replaced by  $(\|T\|^{-nN}M_N)_{N\ge0}$
at the outset, we get $\int\|x\|^N\,dP(x)\le M_N$.
\end{proof}


\section{General testing procedures}\label{S:testing}

\subsection{A  distribution-free procedure}\label{SS:testing1}

Consider the following set-up.
Let $d\ge2$, let $P$ be a Borel probability measure on $\RR^d$,
and let  $G$ be a finite multiplicative group of $d\times d$ orthogonal matrices.
We want to test  the null hypothesis 
\[
H_0:\quad  PT^{-1}=P \quad \text{for all~}T \in G.
\]

Assume that $G$ is generated by $k$ elements, say $\{T_1,\dots,T_k\}$.
Then, as we have seen in \S\ref{S:generators}, 
it suffices just to test whether $PT_j^{-1}=P$ for $j=1,2,\dots, k$.

We now proceed as follows.
Let $\aleph_n:=\{\bX_1, \ldots, \bX_n\}$ be an i.i.d.\ sample of vectors in $\RR^d$ with common distribution $P$.
Consider the following testing procedure. 	
\begin{enumerate}
\item Split the sample $\aleph_n$ at random into two disjoint subsamples
\[
\aleph^{n_1}:=\{\bX_1, \ldots \bX_{n1}\},
\quad
\aleph^{n_2}= \{\bX'_1, \ldots \bX'_{n2}\},
\]
providing two independent samples, which we will take typically of equal sizes.

\item Generate $k$ independent random directions $h_1, \ldots h_k$,
 uniformly distributed on the unit sphere of $\RR^d$. 
		
\item Given a unit vector $h\in\RR^d$, 
denote by $F_h$ the one-dimensional distribution of $\langle h,\bX\rangle$, 
and by $F_h^{T_j}$  that of $\langle h,T_j(\bX)\rangle$ for $j=1, \ldots, k$. 
The previous results suggest testing whether $F_{h_j}^{T_j}=F_{h_j}$, for $j=1, \ldots k$.  

\item Next,  we perform  $k$ Kolmogorov--Smirnov two-sample tests between 
\[
\langle h_j,\aleph^{n_1}\rangle
\quad\text{and}\quad  
\langle h_j,\aleph^{n_2}_{T_{j}}\rangle, \quad j=1, \ldots k,
\]
each one at level $\alpha/k$, where 
\[
\langle h, \aleph^{n_1}\rangle:=\Bigl\{\langle h,\bX_1\rangle, \ldots, \langle h,\bX_{n_1}\rangle \Bigr\}
\]
and
\[
\langle h, \aleph^{n_2}_{T}\rangle:=\Bigl\{\langle h,T(\bX'_1)\rangle, \ldots, \langle h,T(\bX'_{n_2})\rangle\Bigr\}.
\]
		
\item Given $h, T$, let $F_{n_1,h} ^{T}$ and $F_{n_2,h}^{T}$ be the empirical distributions of 
$\langle h, \aleph^{n_1}\rangle$ and $\langle h, \aleph^{n_2}_{T}\rangle$ respectively.
The Kolmogorov--Smirnov statistic for our problem is given by
\[
KS(n_1,n_2, h, T):= \sup_{t \in \RR} \bigl| F_{n_1,h}^{T}(t) -  F_{n_2,h}^{T}(t)\bigr|.
\]

\item Given $\alpha$ with $0<\alpha<1/k$, choose $c_{\alpha/k}$ such that
\[
P\Bigl(KS(n_1, n_2, h_j,T_j)> c_{\alpha/k}\Bigr)= \alpha/k, 
\]
Observe that $c_{\alpha/2, n_1,n_2}$ does not depend on~$j$.
This is because 
\[
\langle h_j,T_j(\bX)\rangle=\langle T_j^*(h_j),\bX\rangle,
\] 
and all the vectors $T_j^*(h_j)$ have the same distribution, namely the uniform distribution on the unit 
sphere in $\RR^d$ (it is here that we need the assumption that the $T_j$ are orthogonal matrices).
		
\item Reject the null hypothesis if, for at least one of the $k$ Kolmogorov--Smirnov tests, the null hypotheses is rejected.   
\end{enumerate} 
	
\begin{theorem} 
Let $P$ be a Borel probability measure on $\RR^d$, where $d\ge2$.
Assume that:
\begin{enumerate}[\normalfont(i)]
\item the absolute moments $m_N:=\int\|x\|^N\,dP(x)$ are all finite and satisfy 
the condition $\sum_{N\ge1}m_N^{-1/N}=\infty$;
\item $P$ is absolutely continuous with respect to Lebesgue measure on $\RR^d$.
\end{enumerate}
Then the preceding test  has a level between $\alpha/k$ and $\alpha$ under the null hypothesis~$H_0$, 
and is consistent under the alternative. 
\end{theorem}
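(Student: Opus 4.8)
The plan is to decompose the procedure into its $k$ component Kolmogorov--Smirnov tests, analyse each in isolation, and then recombine. Write $A_j$ for the event that the $j$-th KS test rejects, so that the overall test rejects exactly on $\bigcup_{j=1}^{k}A_j$. Everything reduces to two facts: under $H_0$ each $A_j$ has probability exactly $\alpha/k$, and under the alternative at least one $A_j$ has probability tending to $1$.

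For the level, I would first note that orthogonality of $T_j$ gives $\langle h_j,T_j\bX\rangle=\langle T_j^*h_j,\bX\rangle$, so under $H_0$ the identity $PT_j^{-1}=P$ forces $F_{h_j}^{T_j}=F_{h_j}$; hence the two projected subsamples feeding the $j$-th test are i.i.d.\ from a single one-dimensional law. Assumption~(ii) ensures this law is continuous---the projection of an absolutely continuous measure onto a line has a density---so the two-sample KS statistic is distribution-free and the threshold $c_{\alpha/k}$ gives $\PP(A_j\mid h_j)=\alpha/k$ for every $h_j$, whence $\PP(A_j)=\alpha/k$. The upper bound is then the union bound $\PP(\bigcup_j A_j)\le\sum_j\PP(A_j)=\alpha$, and the lower bound is the trivial $\PP(\bigcup_j A_j)\ge\PP(A_1)=\alpha/k$; no independence among the $A_j$ is required.

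For consistency, suppose $H_0$ fails. By the proposition of \S\ref{S:generators}, $P$ is not $T_{j_0}$-invariant for some $j_0$, i.e.\ $PT_{j_0}^{-1}\ne P$. The $j_0$-th test compares the projections of $P$ and of $PT_{j_0}^{-1}$ along $h_{j_0}$, and these agree precisely when $h_{j_0}\in\cE(P,PT_{j_0}^{-1})$. This is where assumption~(i) is used: by Theorem~\ref{T:CWgen}, were $\cE(P,PT_{j_0}^{-1})$ of positive Lebesgue measure we would have $P=PT_{j_0}^{-1}$, contradicting $H_0$; hence this set is Lebesgue-null. Being a cone, its trace on the unit sphere has zero surface measure, so the uniform direction $h_{j_0}$ avoids it almost surely. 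Conditioned on such a good $h_{j_0}$ the two marginals differ, so $KS(n_1,n_2,h_{j_0},T_{j_0})$ converges a.s.\ to $\sup_t|F_{h_{j_0}}(t)-F_{h_{j_0}}^{T_{j_0}}(t)|>0$ while $c_{\alpha/k}=O(n^{-1/2})\to0$; thus $\PP(A_{j_0}\mid h_{j_0})\to1$, and bounded convergence over $h_{j_0}$ yields $\PP(A_{j_0})\to1$, giving consistency of the overall test.

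The union-bound bookkeeping and the classical consistency of the two-sample KS test at a shrinking threshold are routine. The crux, and the step I expect to demand the most care, is the consistency argument's dependence on $\cE(P,PT_{j_0}^{-1})$ being Lebesgue-null: this is exactly where Theorem~\ref{T:CWgen} and the moment hypothesis~(i) are indispensable, and it must be coupled with the transfer of the null-measure property from the cone to its intersection with the sphere so that $h_{j_0}$ is good with probability one. A secondary point to handle carefully is the interchange of limit and integration over $h_{j_0}$, which I would justify by bounded convergence since $\PP(A_{j_0}\mid h_{j_0})\le1$ and the good directions have full measure.
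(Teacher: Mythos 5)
Your proposal is correct and follows essentially the same route as the paper: Bonferroni/union bounds for the level, and for consistency the contrapositive of Theorem~\ref{T:CWgen} to show that $\cE(P,PT_{j_0}^{-1})$ is Lebesgue-null so that a random direction detects the discrepancy almost surely, followed by almost-sure convergence of the empirical projected distributions. Your added details (the transfer of null measure from the cone to the sphere, the shrinking critical value, and the bounded-convergence step) are correct elaborations of points the paper leaves implicit.
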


\begin{proof}
By the choice of $c_{\alpha/k}$ and Bonferroni's inequality, we have
\[
\alpha/k \le P\Bigl(\bigcup_{j=1}^k\{KS(n_1, n_2, h_j, T_j) > c_{\alpha/k}\} \Bigr ) \le \alpha.
\]
Thus the test has a level between $\alpha/k$ and $\alpha$.
	
The proof of consistency follows the same lines as the one given  in \cite[Theorem~3.1]{CFR06}.		
Under the alternative hypothesis,
there exists $j_0\in\{1,\dots,k\}$ such that   $P \ne PT^{-1}_{j_0}$.  
Then, by Theorem~\ref{T:CWgen},
for almost all $h\in\RR^d$, there exists $t_h\in\RR$ such that 
$\delta_h:=|F_{h}^{T_{j_0}}(t_h) - F_{h}(t_h)| >0$. 
As $P$ is absolutely continuous with respect to Lebesgue measure on $\RR^d$, 
both $F_h$ and $F_h^{T_{j_0}}$ are continuous, 
and so,  by the strong law of large numbers, 
\[
\lim_{n\to\infty}F_{n,h}(t_h) = F_{h}(t_h)
\quad\text{and}\quad
\lim_{n\to\infty}F_{n,h}^{T_{j_0}}(t_h) = F_{h}^{T_{j_0}}(t_h) \quad\text{a.s.}
\]
Hence, by the triangle inequality,
\[
\liminf_{n_1,n_2\to\infty}KS(n_1,n_2,h,T_{j_0})\ge \delta_h >0\quad\text{a.s.}
\]
This establishes consistency.
\end{proof}


\subsection{Testing using more than one random projection} \label{SS:testing2}

The algorithm presented in \S\ref{SS:testing1} is consistent and distribution-free. 
In this section we consider the case where we use more than one projection, 
as suggested in \cite{CFR07},
in order to increase the power of the test.
Moreover, we no longer need to assume that the matrices $T_j$ are orthogonal. 
The price to be paid is that the proposed statistic is no longer distribution-free. 

In this case we do not need to split the sample in two subsamples.
It just consists on taking $\ell$ random directions $h_1, \ldots, h_\ell$, 
for each direction $h_i$ calculating the statistic $KS(n, h_i, T_j)$ for the univariate projected data, 
and then taking the maximum of them:
$$
D_{n,\ell}:=
\max_{\substack{i=1, \ldots, \ell\\j=1,\dots,k}}KS(n, h_i, T_j)
=\max_{\substack{i=1, \ldots, \ell\\j=1,\dots,k}}\sup_{t \in \mathbb R} | F_{n,h_i}(t) -  F_{n,h_i}^{T_j} (t)|.
$$
Here $F_{n,h_i}(t)$ and $F^{T_j}_{n,h_i}(t)$ are the empirical distribution of 
\[
\langle h_i,\aleph^{n}\rangle
:=\{\langle h_i,\bX_1\rangle, \ldots, \langle h_i,\bX_{n}\rangle \}
\]
and 
\[
\langle h_i,\aleph^{n}_{T_j}\rangle
:=\{\langle h_i,T_j(\bX_{1})\rangle , \ldots, \langle h_i,T_j(\bX_{n})\rangle \},
\]
for $ i=1, \ldots \ell$ and  $j=1, \ldots,k$, where $k$ is the number of generators of the group.

Since  the statistic  is no longer distribution-free for $\ell\ge 2$, 
in order to obtain the critical value for a level-$\alpha$ test, 
we approximate the distribution using bootstrap on the original sample $\bX_1, \ldots, \bX_n$
by generating a large enough number $B$ of values of $D_{n,\ell}$ for each bootstrap sample.
More precisely, for $r=1, \ldots, B$ repeat: 
\begin{enumerate}
	\item Generate a bootstrap sample of $\mathbf X_1^*, \ldots, \mathbf X_n^*$,  by random sampling with replacement from $\aleph_n$, and generate $(\ell+k)$ i.i.d.\ random directions $h_1, \ldots, h_{\ell+k}$. 
	\item Calculate $D_{n,\ell}^*$ based on $\mathbf X_1^*, \ldots, \mathbf X_n^*$ and $T_j(\mathbf X_1^*), \ldots, T_j(\mathbf X_n^*)$. 
\end{enumerate}
We end up with a sample  $ \cD^*:= \{D_{n,l}^{*1}, \ldots , D_{n,l}^{*B}\}$ of size $B$, 
and take as critical value the $(1-\alpha)$-quantile of the empirical distribution of $\cD^*$.  
The validity of the bootstrap in this case  follows from  \cite[Theorems~3 and~4]{Pr95}.


\section{Application to exchangeability}\label{S:exch}

\subsection{Background}\label{SS:exchback}

A $d$-tuple of random variables $\bX:=(X_1,\dots,X_d)$ is said to be \emph{exchangeable} if
it has the same distribution as $(X_{\sigma(1)},\dots,X_{\sigma(d)})$
for every permutation $\sigma$ of $\{1,\dots,d\}$.
This is equivalent to demanding that the distribution $P$ of $\bX$ be $G$-invariant, 
where $G$ is the group of $d\times d$ permutation matrices.

The notion of exchangeability plays a central role in probability and statistics. 
See for instance \cite{Al85} for a detailed study of exchangeability, 
and \cite{Ki78} on some of the important uses of it. 
In particular, permutation tests provide exact level tests for a wide variety of practical testing problems, 
but they rely on the assumption of exchangeability.
 
From the point of view of applications, 
it is quite common to assume that 
a given vector of data $(X_1, \ldots, X_n)$
(where the information $X_i$ corresponds to the $i$-th subject of a study)
does not depend on the order in which the data are collected. 
Although independence of the $X_i$ is quite often assumed,
this is stronger than the notion of exchangeability.
The relationship is made precise by de Finetti's theorem,
according to which, if $\bX:= (X_1, X_2, \ldots)$
is a sequence of random variables 
whose distribution is invariant under permutations of finitely many coordinates,
then the law of $\bX$ is a mixture of i.i.d.\ measures.  
De Finetti's theorem applies to an infinite exchangeable sequence of random variables,
but there are also versions that apply to finite exchangeable sequences,
see e.g.\ \cite{DF80, JKY16, KY19}  and the references therein.

The problem of testing for exchangeability has been considered quite extensively in the literature.
Modarres \cite{Mo08} compares a number of different methods: the run test, the nearest neighbour test,
the rank test, the sign test and the bootstrap test.
The problem of testing exchangeability in an on-line mode has been considered  by 
Vovk, Nouretdinov and Gammerman \cite{VNG03}.
The data are observed sequentially,
and the procedure provides a way to monitor on-line the evidence against the exchangeability assumption, 
based on exchangeability martingales. 
A test for the case of binary sequences can be found in \cite{RRLK21}, using a similar approach. 

Genest, Ne\u slehov\'a and Quessy \cite{GNQ12} propose an interesting test for a closely related problem: exchangeability of copulas for two-dimensional data. This has been  extended to arbitrary finite dimensions by Harder and Stadtm\"uller \cite{HS17},
using the observation that $\Sigma_d$ can be generated by just two elements (see Example~\ref{Ex:2gen} above). See also the recent article \cite{BQ21} by  Bahraoui and Quessy, where they also consider the problem of testing if the copula is exchangeable, but, instead of using empirical copulas as in the previous results, they consider a test based on the copula characteristic function. As they mention:
``From a modeling point-of-view, it may be of interest to check whether the copula of a population is exchangeable. The main point here is that this step may be accomplished without making any assumption on the marginal distributions. Hence, a vector can have exchangeable components at the level of its dependence structure, that is, its copula, whether or not its marginal distributions are identical." Under the extra assumption that all marginals have the same distribution, it provides a test for exchangeability of the distributions.
However, they need to use bootstrap in order to perform their test.
	 
In the next paragraph, we  perform some simulations to show how the test procedures described in \S\ref{S:testing}
apply to this situation. In \S\ref{SS:exchsim2} we compare our test for exchangeability with the one proposed by Harder and Stadtm\"uller  in \cite{HS17}.


\subsection{Simulations for the test for exchangeability}\label{SS:exchsim1}
We consider a sample of a multivariate normal distribution in dimension $d$, with mean zero and covariance matrix given by a mixture
$(1-\xi)\Sigma^{(1)} +\xi \Sigma^{(2)},$ where $\Sigma^{(1)}$ corresponds to a exchangeable distribution and $\Sigma^{(2)}$ is a Toeplitz matrix, given by 
\[
\Sigma^{(1)}:= 
\begin{pmatrix}
1 & \rho & \cdots & \rho\\
\rho& 1 & \cdots & \rho\\
\vdots & \vdots & \ddots & \vdots\\
\rho& \rho & \cdots & 1
\end{pmatrix},  
\quad  
\Sigma^{(2)}:= 
\begin{pmatrix}
1 & \rho & \rho^2 & \cdots & \rho^{d-1}\\
\rho & 1 & \rho &  \cdots & \rho^{d-2}\\
\rho^2 & \rho &  1 &   \cdots & \rho^{d-3}\\
\vdots & \vdots & \ddots  & \ddots & \vdots\\
\rho^{d-1}& \rho^{d-2} & \cdots &\rho & 1
\end{pmatrix}.
\]
In this simulation, we take $\rho:=1/2$. If $\xi=0$, then the distribution is exchangeable, 
and as it increases to $1$ we get further from the null hypothesis. 
To analyze the power function of the proposed test, 
we consider some different scenarios:
the dimension $d$ being $6$ and $10$, 
the sample size $n$ being $1000$ and $5000$, 
and the number of projections being $1,10,50, 100$. 
In all cases, the level of the test is  $0.05$.
In Figure~\ref{F:normal} we plot the empirical power function as a function $g(\xi)$ of $\xi \in [0,1]$.

\begin{figure}[ht]
\centering
\subfloat[]{\includegraphics[scale=0.65]{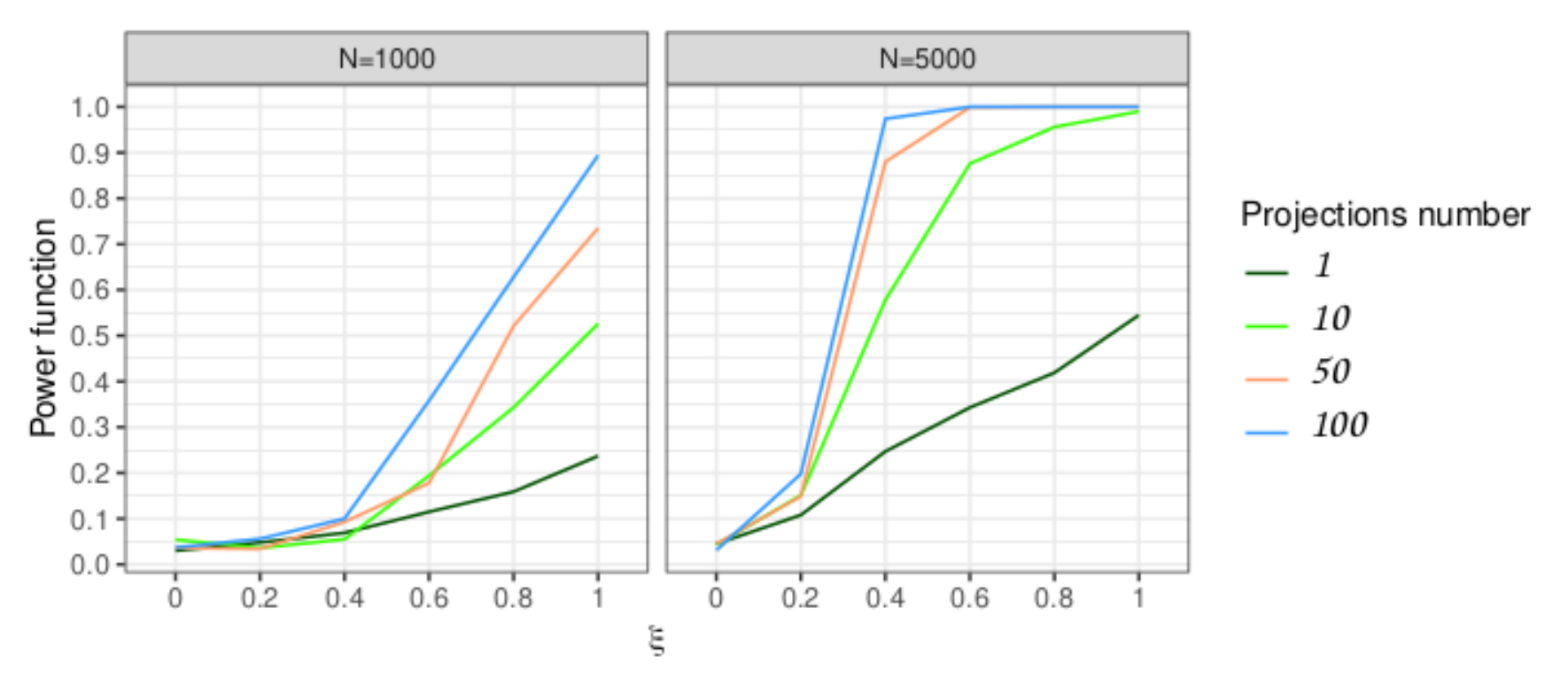}}
\hfill
\subfloat[]{\includegraphics[scale=0.65]{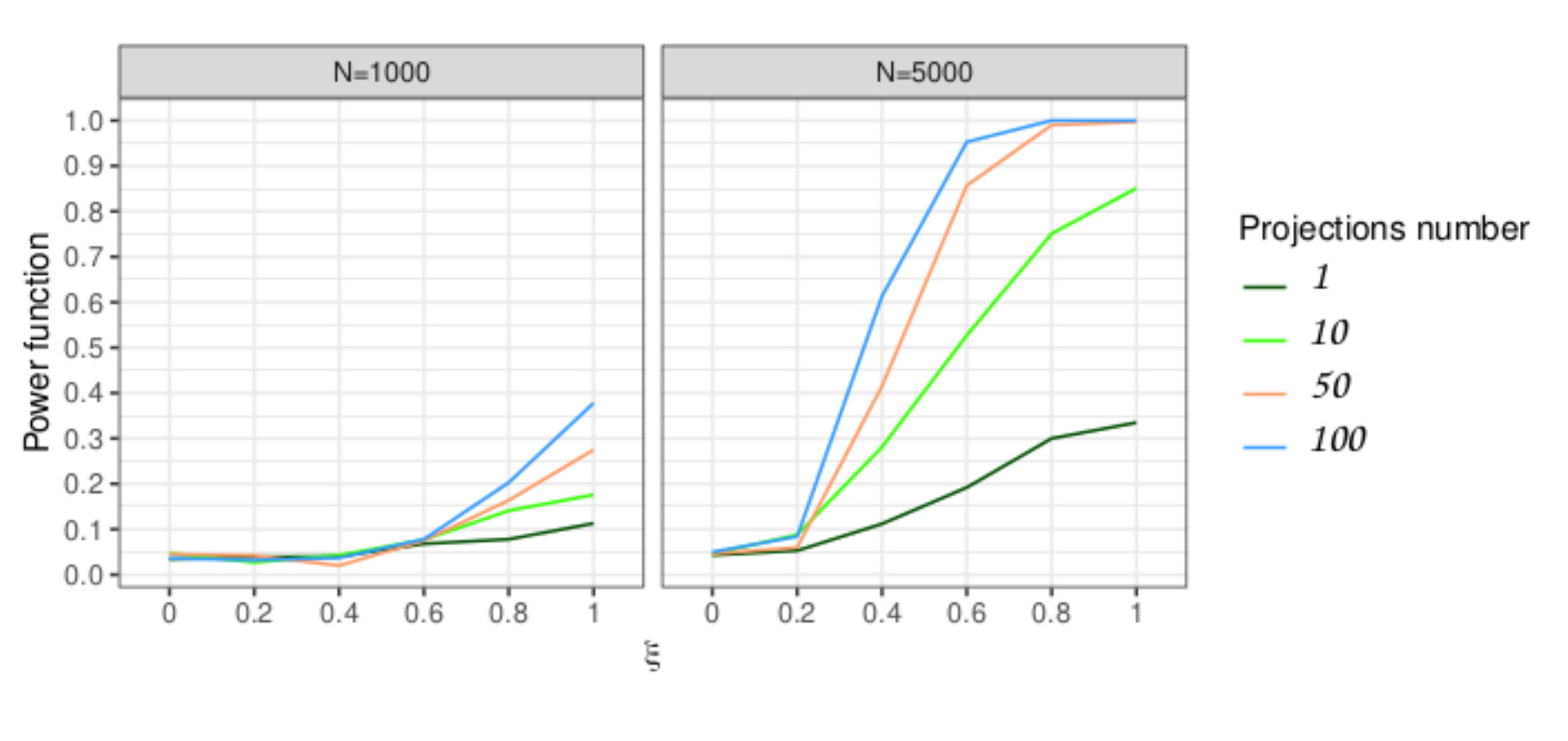}}
\caption{\small Empirical power function $g(\xi)$ for $\xi \in [0,1]$. Upper panel (a) for  dimension $d=6$, lower panel (b) for $d=10$.}\label{F:normal}
\end{figure}


\subsection{Comparison with a copula-based test}\label{SS:exchsim2}
As we mentioned in \S\ref{SS:exchback},
Harder and Stadtm\"uller \cite{HS17} proposed an interesting test for exchangeability of copulas, 
which we describe briefly here. Assuming that all marginal distributions are equal it provides a exchangeability test for distributions. However, without the extra assumption that all marginals coincide, the exchangeability of the copula does not imply exchangeability of the probability distributions. 
Thus, without this assumption,  it is also necessary to test that all marginal distributions coincide, which, for even moderate dimensions, makes the problem harder. For the comparison below, we consider a case where all marginal distributions coincide. 

Let $\bX:=(X_1, \ldots, X_d)$ be a random vector with joint distribution $F$ and 
continuous marginal distributions $F_1 \dots F_d$.
Assume that $F_1,\dots,F_d$ are continuous.
Then $U_j:=F_j(X_j)$ is uniformly distributed on the interval $[0,1]$ for each~$j$,
and the distribution $C$ of $(U_1, \ldots, U_d)$ is the unique copula 
such that $F(x_1,\dots,x_d)= C(F_1(x_1), \ldots, F_d(x_d))$.

Given an i.i.d. sample $\bX_1, \ldots, \bX_n \in \RR^d$, set
\begin{equation}\label{E:empcop}
\hat C_n(\bu):= \frac{1}{n} \sum_{i=1}^n \prod_{j=1}^d \cI_{\{\hat F_{jn}(X_{ij})\leq u_j\}}, 
\quad \bu:=(u_1, \ldots, u_d) \in [0,1]^d,  
\end{equation}
where $\hat F_{jn}, \ j=1, \ldots, d$ are the empirical marginal distributions of the sample.
Among many other interesting asymptotic results,  Deheuvels (\cite{De81}) showed that
\begin{equation}\label{E:deh}
\sup_{\bu \in [0,1]^d}|\hat C_n(\bu) - C(\bu)\vert = O\bigl(n^{-\frac{1}{2}}(\log\log n)^\frac{1}{2}\bigr) \quad\text{a.s.},
\end{equation}
which suggests using this statistic for testing.

Based on $\hat C_n(\bu)$, Harder and Stadtm\"uller \cite{HS17}
proposed a test of exchangeability for the problem
\begin{align*}
&H_0:  \quad  C(\bu)= C(\bu_{\sigma}) \quad\text{for all $\bu \in [0,1]^d$ and all $\sigma \in \Sigma_d$}, \\
\intertext{against}
&H_A:\quad C(\bu)\ne  C(\bu_{\sigma}) \quad\text{for some $\bu \in [0,1]^d$ and some $\sigma \in \Sigma_d$},
\end{align*}
by performing a test based on statistics defined as integral versions of the difference 
$(\hat C_n(\bu)- \hat C_n(\bu_{\sigma}))$, such as
\begin{equation}\label{E:copulas}
S_n:= \sum_{\sigma \in \cG_0} \int_{[0,1]^d}\bigl(\hat C_n(\bu)- \hat C_n(\bu_{\sigma})\bigr)^2 w(\bu, \sigma)\,dC(\bu), 
\end{equation}
where $\cG_0$ is a set of generators for the permutation group $\Sigma_d$,
and $w(\bu, \sigma)$ is a bounded continuous weight function. 
From equation~\eqref{E:deh} and the dominated convergence theorem, it follows  that 
$S_n \to S$ a.s., where 
\[
S:= \sum_{\sigma \in \cG_0} \int_{[0,1]^d}\bigl(C(\bu)-  C(\bu_{\sigma})\bigr)^2 w(\bu, \sigma)\,dC(u).
\]
This is shown in \cite[ Lemma 3.1]{HS17}, 
while the asymptotic distribution is derived in  \cite[Theorem 3.4]{HS17} under some regularity assumptions.

As in \cite{HS17}, we consider hierarchical copulas for two different scenarios:
\[
C^1_{\theta_0, \theta_1}(u):= C_{\theta_0} \bigl( u_1, C_{\theta_1}(u_2, u_3) \bigr),
\]
and
\[
C^2_{\theta_0, \theta_1}(u):= C_{\theta_0} \bigl( C_{\theta_1}(u_1, u_2) , u_3\bigr),
\] 
where $C_{\theta}$ is the Clayton bivariate copula with parameter  $\theta$.
The parameters  $\theta_0$ and $\theta_1$ are chosen so that their Kendall indices $\tau$ are   
\[
\tau_0:=5/12-\xi/60 \quad\text{and}\quad \tau_1:=5/12+\xi/60 \quad\text{for~}\xi \in \{0,1,\ldots,7\}.
\]
When $\xi=0$ we are under the null hypothesis, and as $\xi$ increases we are further away from the null.
The number of random directions and sample sizes are the same as in \S\ref{SS:exchsim1}.  
 
The empirical power functions are shown in Figures~\ref{F:copula1} and~\ref{F:copula2}.

\begin{figure}[ht]
\centering
\includegraphics[scale=0.75]{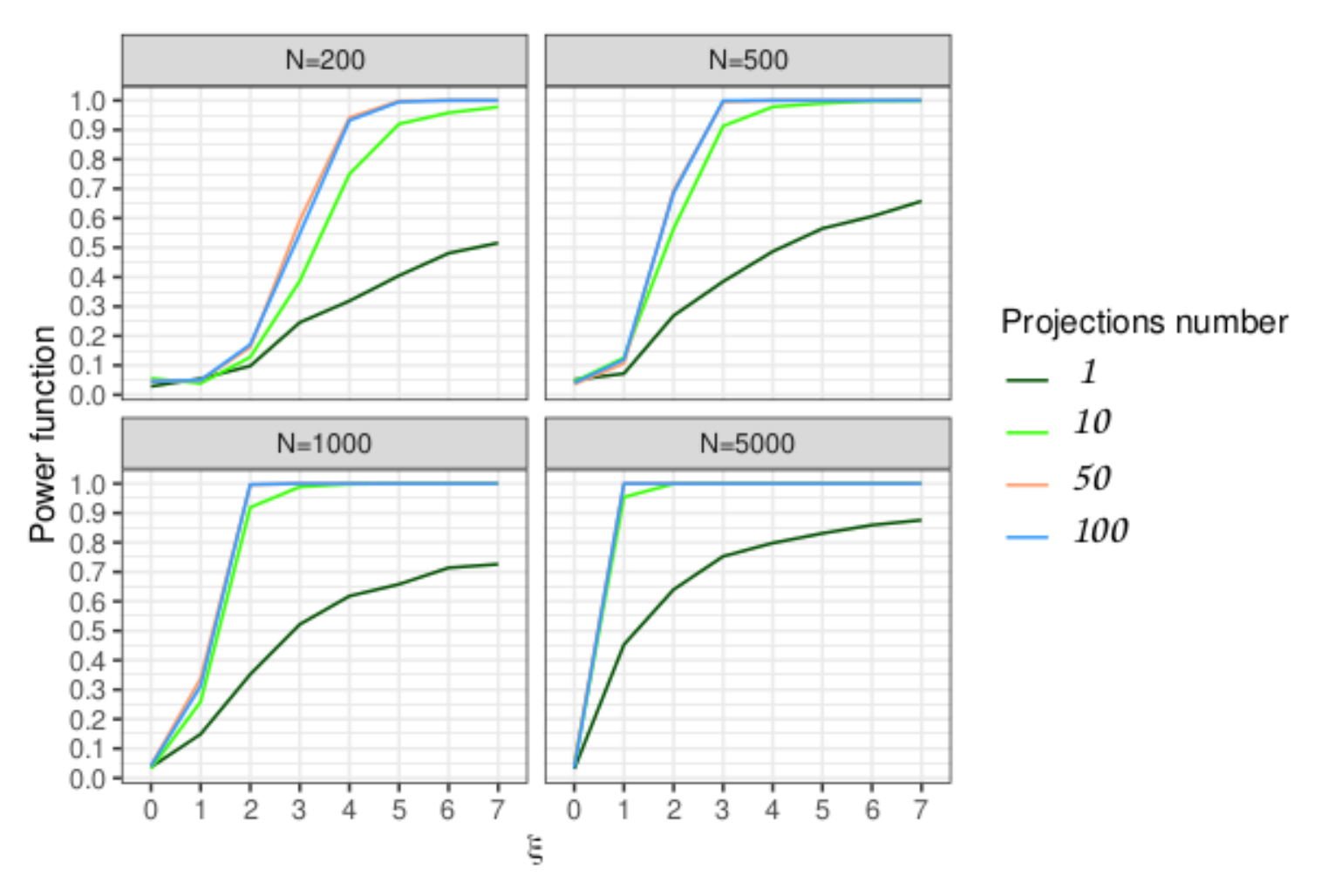}
\caption{\small Empirical power function for the hierarchical copula for the exchangeability test as a function of  $\xi \in \{0,1,\ldots,7\}$, in dimension 3  for the copula  $C^1$.}
\label{F:copula1} 
\end{figure}

\begin{figure}[ht]
\centering
\includegraphics[scale=0.75]{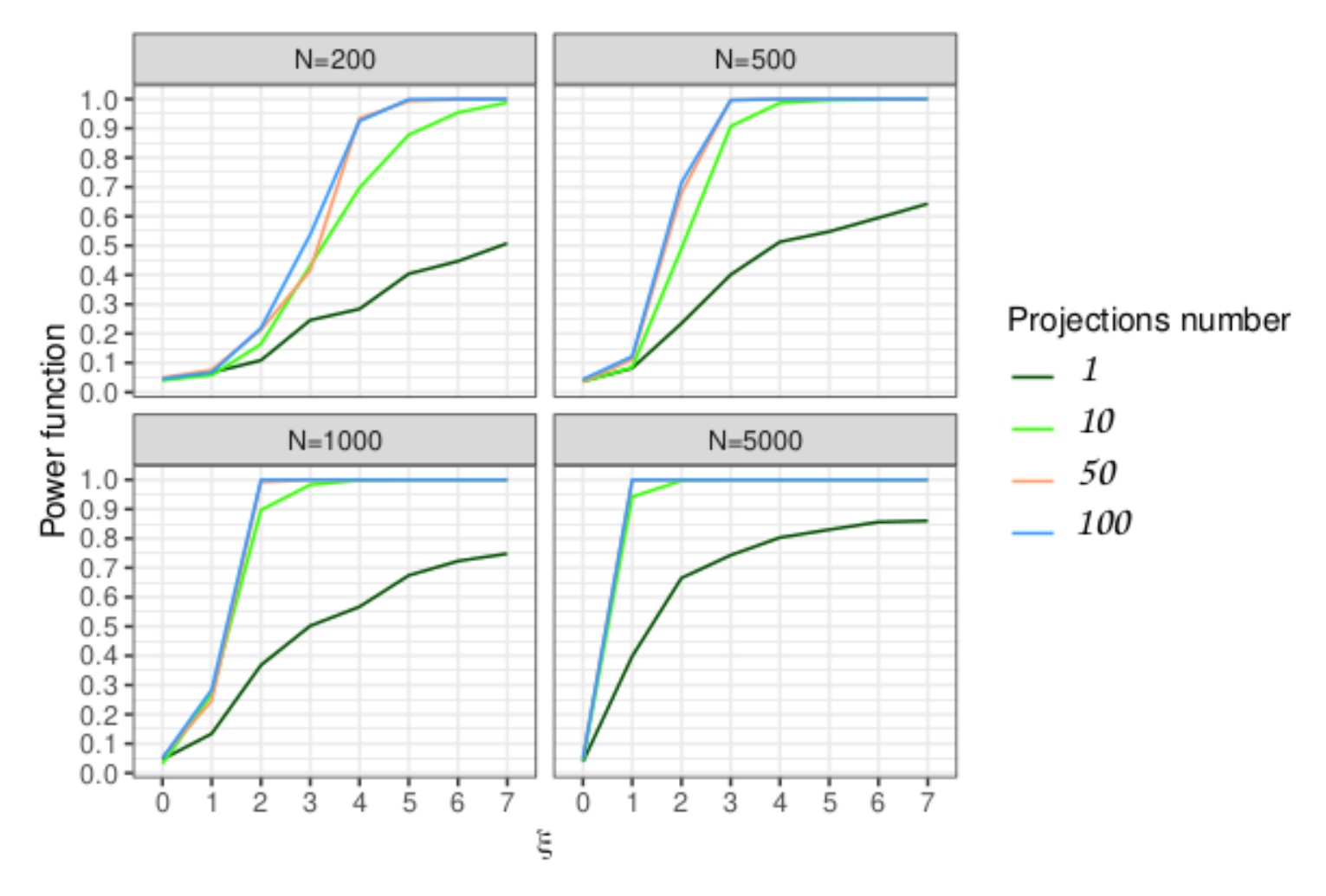}
\caption{\small Empirical power function for the hierarchical copula for the exchangeability test as a function of  $\xi \in \{0,1,\ldots,7\}$, in dimension 3  for the copula  $C^2$.}
\label{F:copula2} 
\end{figure}

Our results  show that the empirical power functions are better than those reported in \cite{HS17} for a sample size $N=1000$. We compare with the best results obtained for this scenario in \cite{HS17} using the statistic $S_{1000}$ defined by equation \eqref{E:copulas} with our proposal $D_{1000,50}$, for different values of $\xi$, 
see Table~\ref{Ta:pot}.

{\begin{table}[ht]
\caption{\small Empirical power functions for the tests based on  $S_{n}$  and $D_{n,50}$ for a sample of size $1000$. 
We consider a random vector with uniform marginals 
and the copulas  $C^1_{\theta_0, \theta_1}$ and $C^2_{\theta_0, \theta_1}$ in dimension $3$, 
for different values of $\xi$ when the level of the test is  $5 \%$.}
\label{Ta:pot}
\begin{center}
\begin{tabular}{ccccc}\toprule
& \multicolumn{2}{c}{$C^1_{\theta_0, \theta_1}$} & \multicolumn{2}{c}{$C^2_{\theta_0, \theta_1}$} 
\\\cmidrule(lr){2-3}\cmidrule(lr){4-5}
$\xi$ & $S_{1000}$ & $D_{1000,50}$  & $S_{1000}$  & $D_{1000,50}$   \\\midrule
0   & 0.044 & 0.043& 0.033 &  0.052 \\
1& 0.182& 	0.338 & 0.112 & 0.246 \\
2 & 0.667 & 0.996 & 0.416 & 0.992 \\
3   & 0.981 &1.000 & 0.865 &  1.000\\
4 & 1.000 & 1.000 & 0.999 & 1.000\\
5   & 1.000 & 1.000 & 1.000 & 1.000\\
\bottomrule
\end{tabular}
\end{center}
\end{table}}

\section{Sign-invariant exchangeability}\label{S:signexch}

\subsection{Background}\label{SS:signexchback}
Berman \cite{Be62,Be65} introduced the notion of sign-invariant exchangeability, 
which is defined as follows.
A $d$-tuple of random variables $\bX:= (X_1, \ldots, X_d)$ is \emph{sign-invariant} if
it has the same distribution as $(\epsilon_1 X_1, \ldots, \epsilon_d X_d)$ 
for every choice of $(\epsilon_1,\dots,\epsilon_d) \in \{-1,1\}^d$. 
It is \emph{sign-invariant exchangeable} if it is both sign-invariant and exchangeable.

Equivalently, $(X_1, \ldots, X_d)$ is sign-invariant exchangeable if and only if it has the same
distribution as 
$(\epsilon_1 X_{\sigma(1)},\dots,\epsilon_d X_{\sigma(d)})$ for all $\sigma\in\Sigma_d$
and  $(\epsilon_1,\dots,\epsilon_d) \in \{-1,1\}^d$.
This amounts to saying that the distribution $P$ of $\bX$ is $G$-invariant,
where $G$ is the group of $d\times d$ signed permutation matrices.
As remarked in  Example~\ref{Ex:sign}, $G$ can be generated by three matrices $T_1,T_2,T_3$,
so, to test for $G$-invariance, it suffices to test whether $P=PT^{-1}_j$ for $j=1,2,3$.


\subsection{Simulations for the test for sign-invariant exchangeability}\label{SS:signexchsim}
  
We consider a sample of a multivariate  normal distribution in dimension $d$ for $d=3, 6, 10$, 
with mean zero and covariance matrix given by  $\Sigma^{(1)}$, defined in \S\ref{SS:exchsim1}
(where now $\rho$ is variable).
When $\rho=0$ the distribution is  sign-invariant exchangeable. 
We consider sample sizes of $200, 500, 1000$ and $5000$, 
and we consider  $1, 10, 50$ and $100$ random directions in $\RR^3$. 

A plot of the empirical power function of the test as a function of $\rho \in [0,1]$
using \S\ref{SS:signexchback} and \S\ref{SS:testing2} is given in Figure~\ref{F:signexch}.

\begin{figure}[ht]
\centering
\includegraphics[width=0.95\textwidth]{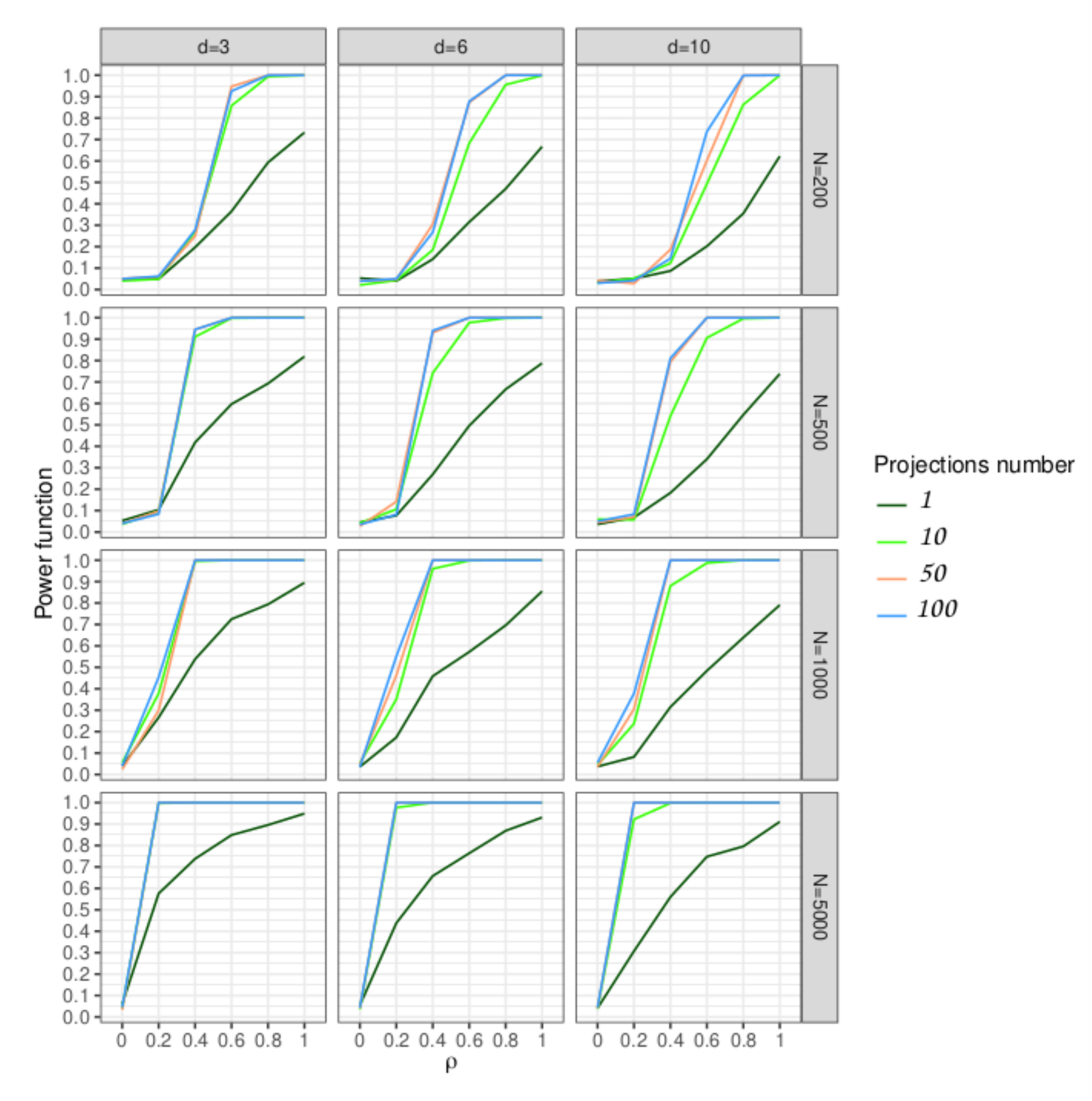}
\caption{\small Empirical power function for sign-invariant exchangeable testing  with $\alpha=0.05$ and varying $\rho \in [0,1]$. 
We consider samples sizes of $200,500,1000$ and $5000$, and   $1,10,50$ and $100$ random directions in $\RR^2$.}
\label{F:signexch} 
\end{figure}

\section{An infinite-dimensional example}\label{S:infdim}

\subsection{Background}
As mentioned in \S\ref{S:CW},
there are several variants of the Cram\'er--Wold theorem, 
including versions for Hilbert spaces \cite{CFR07}
and even Banach spaces \cite{CF09}.
These can be exploited to provide tests for 
$G$-invariance  in infinite-dimensional spaces.
We consider here the case of a Hilbert space.

Let $\cH$ be a separable infinite-dimensional Hilbert space,
let $P$ be a Borel probability measure on $\cH$,
and let $G$ be a group of invertible continuous linear
maps $T:\cH\to \cH$. We want to test the null hypothesis
\[
H_0: \quad PT^{-1}=P \quad\text{for all~}T\in G.
\]
Nearly everything goes through as before.
The only adjustment needed is in Theorem~\ref{T:CWgen},
because Lebesgue measure no longer makes sense 
in infinite dimensions. It role is taken by a non-degenerate gaussian
measure on $\cH$. What this means is explained in detail in \cite[\S4]{CFR07}, where one can find the proof of the following result,
which serves as a replacement for Theorem~\ref{T:CWgen}.

\begin{theorem}[\protect{\cite[Theorem~4.1]{CFR07}}]\label{T:cwgauss}
Let $\cH$ be a separable Hilbert space,
and let $\mu$ be a non-degenerate gaussian measure on $\cH$.
Let $P,Q$ be  Borel probability measures on $\cH$.
Assume that
the absolute moments $m_N:=\int\|x\|^N\,dP(x)$ are all finite and satisfy 
\begin{equation}
\sum_{N\ge1}m_N^{-1/N}=\infty.
\end{equation}
If the set $\cE(P,Q)$ is of positive  $\mu$-measure in $\cH$,
then $P=Q$.
\end{theorem}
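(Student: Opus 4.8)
The plan is to reduce everything to the classical Cram\'er--Wold theorem (Theorem~\ref{T:CW}): I will show that the hypotheses force $\cE(P,Q)=\cH$, and once all one-dimensional projections of $P$ and $Q$ agree, $P=Q$ follows. Throughout write $E:=\cE(P,Q)$ and recall that $x\in E$ means precisely that $\phi_P(tx)=\phi_Q(tx)$ for all $t\in\RR$.

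First I would extract moment information from $E$. For $x\in E$, the function $t\mapsto\phi_P(tx)$ is the characteristic function of the real random variable $\langle x,X\rangle$; since $\int|\langle x,y\rangle|^N\,dP(y)\le\|x\|^N m_N<\infty$, it is $C^\infty$ at the origin, with its $N$th derivative there determined by the $N$th projected moment. As $\phi_P(tx)-\phi_Q(tx)\equiv0$ on $E$, repeated differentiation at $t=0$ shows that the moment-difference forms
\[
\Delta_N(x):=\int\langle x,y\rangle^N\,dP(y)-\int\langle x,y\rangle^N\,dQ(y)
\]
all vanish on $E$. (The $Q$-integrals are finite for $x\in E$ because the common characteristic function is smooth there; making $\Delta_N$ a genuine polynomial on a large enough set is the delicate point, discussed below.)

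The heart of the argument is then a rigidity lemma: a nonzero continuous homogeneous polynomial on $\cH$ cannot vanish on a set of positive $\mu$-measure when $\mu$ is a non-degenerate gaussian. I would prove this by choosing an orthonormal basis diagonalising the covariance, so that $\mu$ factors as $\mu_n\otimes\mu_n^\perp$ over a splitting $\cH=V_n\oplus V_n^\perp$ with $V_n$ finite-dimensional. Choosing $n$ so large that the top homogeneous part of the polynomial does not vanish identically on $V_n$, the slice $u\mapsto\Delta_N(u+w)$ is, for \emph{every} $w\in V_n^\perp$, a nonzero polynomial on $V_n\cong\RR^n$; its zero set is Lebesgue-null, hence $\mu_n$-null, and Fubini gives $\mu$-measure zero for the whole zero set. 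Applying this to each $\Delta_N$ shows $\Delta_N\equiv0$ on $\cH$, so the projected moments of $P$ and $Q$ agree in every direction.

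Finally, the moment condition $\sum_N m_N^{-1/N}=\infty$ (which passes to the projected moments, bounded by $\|x\|^N m_N$) makes each one-dimensional moment problem determinate, so equality of all projected moments upgrades to equality of the projected distributions for every $x$; that is, $\cE(P,Q)=\cH$, and Theorem~\ref{T:CW} finishes the proof. I expect the main obstacle to be the rigidity lemma together with its infinite-dimensional integrability wrinkle: in infinite dimensions $Q$ need not have a finite $N$th moment in every direction, so $\Delta_N$ may fail to be globally defined, and my earlier counterexample concern (a positive-measure cone can avoid every fixed finite-dimensional subspace) shows that a naive coordinatewise reduction does not suffice. I would circumvent this by working on the measurable linear subspace $G$ of directions in which $Q$ has a finite $N$th moment; since $G\supset E$ it has positive, hence by the gaussian zero--one law full, $\mu$-measure, so $G$ is dense, and running the rigidity argument relative to $G$ together with the closedness of $\cE(P,Q)$ still yields $\cE(P,Q)=\cH$.
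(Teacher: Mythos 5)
First, a point of order: the paper does not prove this statement. It is quoted verbatim from \cite[Theorem~4.1]{CFR07}, and the authors explicitly refer the reader to \cite[\S 4]{CFR07} for the proof, so there is no in-paper argument to compare against. Measured against the argument in that reference, your sketch reconstructs the right architecture: the moment differences $\Delta_N$ vanish on $\cE(P,Q)$ (indeed trivially so, since for $x\in\cE(P,Q)$ the projected measures are \emph{equal}, which also gives finiteness of the projected $Q$-moments there for free); a rigidity lemma forces each $\Delta_N$ to vanish $\mu$-a.e.; Carleman determinacy upgrades equality of projected moments to equality of projected laws, so $\mu(\cE(P,Q))=1$; and since $\cE(P,Q)$ is closed and a non-degenerate gaussian charges every nonempty open set, $\cE(P,Q)=\cH$ and Cram\'er--Wold finishes. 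Those bookends are correct.

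The gap is exactly where you suspect it, and your proposed repair does not close it. Your rigidity lemma is proved for a continuous polynomial defined on \emph{all} of $\cH$: you need $V_n$ spanned by covariance eigenvectors (so that $\mu=\mu_n\otimes\mu_n^\perp$) on which the top homogeneous part of $\Delta_N$ is not identically zero, and you need the slice $u\mapsto\Delta_N(u+w)$ to be a genuine polynomial for every $w$. But $\Delta_N$ is only defined on the subspace $G$ of directions with finite $N$th $Q$-moment, and the zero--one law only makes $G$ a dense, full-measure subspace; nothing forces $V_n\subset G$, so the slices on which your whole argument rests may be undefined, and ``the top homogeneous part of $\Delta_N$ on $V_n$'' has no meaning. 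One can partially repair this by slicing along a finite-dimensional $V$ spanned by elements of $\cE(P,Q)\subset G$ (such $V$ exist in every dimension, since proper closed subspaces are $\mu$-null; then each fiber $V+w$ either lies in $G$ or misses it, $\mu$-a.e.\ fiber being of the first kind, and the gaussian conditionals on these fibers are still non-degenerate). But then Fubini only yields that $\Delta_N$ vanishes on a positive-measure \emph{family} of fibers, not $\mu$-a.e.: you have lost the leverage that the slice polynomial is nonzero for \emph{every} $w$, because you can no longer certify $\Delta_N|_V\not\equiv0$. What is actually needed is the statement that a measurable, densely defined polynomial of degree at most $N$ on a gaussian space vanishing on a set of positive measure vanishes a.e.; that is true, and it is the content of the corresponding lemma in \cite{CFR07}, but it is a theorem requiring its own proof, not a corollary of the finite-dimensional slicing as you have written it. In short: right strategy, same route as the source, but the infinite-dimensional rigidity step is asserted rather than proved.
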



\subsection{Simulations for an infinite-dimensional example}

In this example, we perform a simulation study on the exchangeability test
for multidimensional functional data in the Hilbert space $\mathcal{L}^3:=\otimes_{i=1}^3 L^2[0,1]$. 
We consider a sample of i.i.d.\ random functional vectors  $\{\mathbf{X}_i\}_{i=1, \ldots,n} \subset \mathcal{L}^3$,
 where the marginal components are defined as 
\[
X_{i,j}(t):= m(t)+ \epsilon_{i,j}(t),  \quad i \in \{1,2,\dots, n\} \textrm{\and\ } j \in \{1,2,3\},
\]
where $m(t):= \cos (2 \pi t)$, and where  $\epsilon_{1,j}, \ldots, \epsilon_{n,j}$ are i.i.d\ gaussian processes with
\[
\textrm{Cov}(\epsilon_{i,j}(t), \epsilon_{i,j}(s))= \textrm{exp} \left( - \vert s-t \vert \right).
\]
Finally, we assume that the correlation function between the marginal components satisfies
\[
\textrm{Cor}(\epsilon_{i,j_1}(t), \epsilon_{i,j_2}(t))=
\begin{cases}
0.5 -\delta, &\text{if~} j_1=1,~j_2=3,\\
0.5, &\text{if~} j_1=2,~j_2=3,\\
0.5+\delta &\text{if~} j_1=1,~j_2=2.
\end{cases}
\] 
For $\delta=0$, the functional vector is exchangeable.  

The random projections are taken over a standard multivariate brownian $\mathbf{W}$ in $[0,1]^3$, that is,
$$\langle \mathbf{X}, \mathbf{W} \rangle = \sum_{i=1}^3 \langle {X}_j, {W}_j \rangle, $$
where  ${W}_j$ are the i.i.d.\  standard brownian in $[0,1]$ and $$\langle {X}_j, {W}_j \rangle= \int_0^1 {X}_j(t){W}_j(t) dt.$$
The functions are discretized on an equispaced grid of size $100$ in $[0,1]$.
Figure~\ref{functional} depicts a realization of the functional random vector for $\delta=0.3$. 

\begin{figure}[ht]
\centering
 \subfloat{\includegraphics[width=125mm]{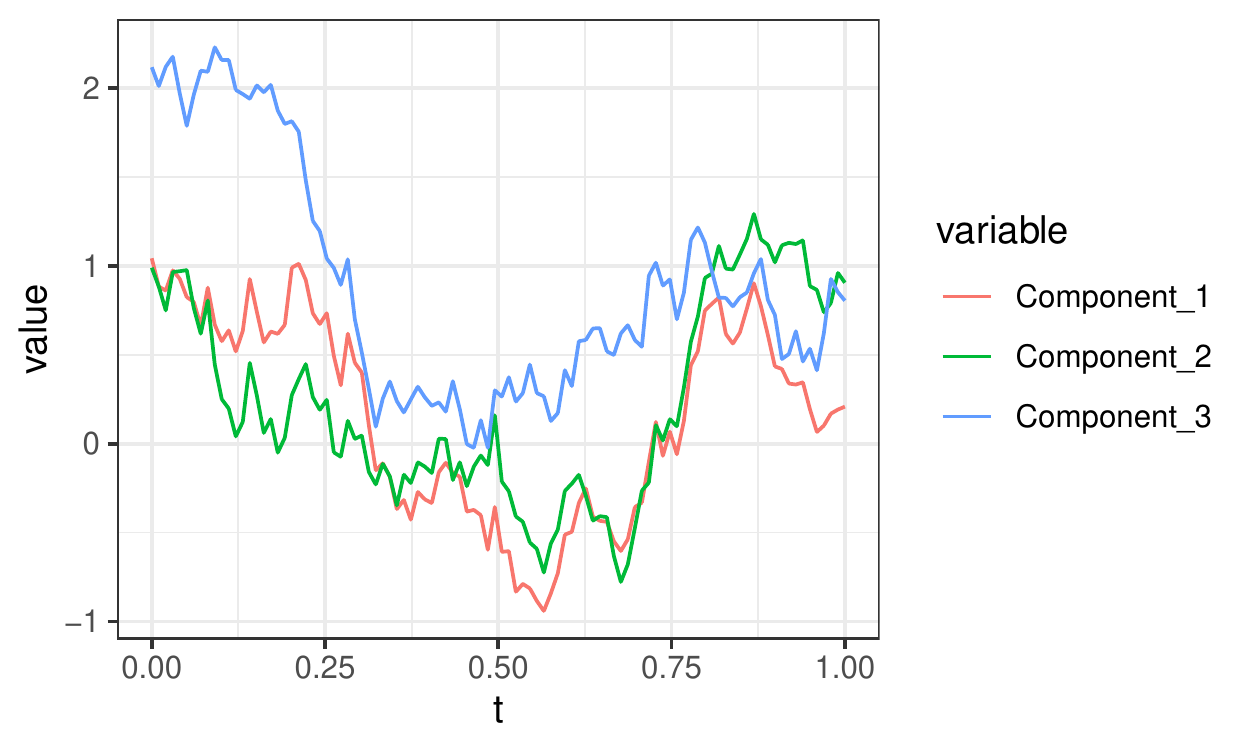}}
\caption{Marginal curves of a simulation of $\mathbf{X}$ for $\delta=0.3$}
\label{functional}
\end{figure}

The statistic $D_{n,\ell}$ is calculated (as in Section~\ref{SS:testing2}) for the values  $n \in \{250,500,1000\}$ 
and $\ell \in \{10,50,100\}$. The power functions through $1000$ replicates are determined in
Table~\ref{Tab:pvalores}. We find that the results are quite good.

\begin{table}
\caption{Empirical power function $D_{n,\ell}$ for $\delta \in [0,0.3]$,  $n \in \{250,500,1000\}$ and $\ell \in \{10,50,100\}$.  Significance level= $0.05$. } \label{Tab:pvalores}
	\begin{center}
\begin{tabular}{c|ccc|ccc|ccc}
 \toprule[0.4 mm]
 & \multicolumn{3}{c|}{$n=250$}  &\multicolumn{3}{c|}{$n=500$}  &\multicolumn{3}{c}{$n=1000$} \\
\midrule[0.4 mm]
$\delta \, \, \backslash  \,\, \ell$  \ & $10$ & $50$& $100$&$10$ & $50$& $100$&$10$ & $50$& $100$ \\
\hline
$0$ & 0.044 & 0.035 &0.051& 0.044&  0.040 & 0.045 &0.051 & 0.04 & 0.044 \\
$0.05$ & 0.050& 0.045  &0.059& 0.043 & 0.040 & 0.061 &0.078 & 0.065 & 0.080  \\
$0.1$ & 0.078  & 0.069&0.089& 0.116  & 0.103 & 0.152 &0.264& 0.266&  0.350\\
$0.15$ & 0.116 & 0.121&0.194& 0.318 & 0.342 & 0.474  &0.727 &0.877 & 0.946 \\
$0.20$ & 0.298 & 0.331&0.470& 0.643  & 0.842& 0.945 &0.926 & 0.999&  1.000\\
$0.25$& 0.576 & 0.721 &0.853& 0.851 & 0.995 & 1.00 & 0.977 & 1.000&  1.000\\
$0.30$ & 0.781 & 0.977 &0.997& 0.989 & 0.995& 1.00 &0.996 & 1.000 &  1.000\\
\toprule[0.4 mm]
\end{tabular}
\end{center}
\end{table}


\section{Real-data examples}

We conclude the article with two examples drawn from real datasets.
In both cases, the test is for exchangeability.

\subsection{Biometric measurements}

These data were collected in a study of how data on various characteristics of the blood varied with sport, body size and sex of the athlete, by the Australian Institute of Sport. The data set, see  \cite{CW94}, is available in the package \textit{locfit} of the $R$ software. The data set  is composed of $202$ observations and $13$ variables. As in \cite{BQ21}, five covariates are considered: red cell count (RCC),  haematocrit (Hc), haemoglobin (Hg),  lean body mass (LBM) and height (Ht). It is clear that the marginal distributions are different, so the distribution is not exchangeable. In order to check the exchangeability of the copula, the marginal distributions are standardized by the transformation $U_n = F_n(X)$, where $F_n$ is the empirical cumulative distribution of the random variable~$X$. Figure~\ref{F:symIndex}(A)  displays the bivariate symmetry index $S_n$ developed in \cite{GNQ12},
\[
S_n:= \int_0^1 \int_0^1 \left[ \hat{C}_n(u,v) - \hat{C}_n(v,u) \right]^2 \textrm{d}\hat{C}_n(u,v),
\]
where $\hat{C}_n$ is the empirical copula as in (\ref{E:empcop}).

The greatest asymmetry is observed between variables LBM and Hg, 
but in all cases the proposed test in \cite{GNQ12} does not reject the symmetry null hypothesis 
(the p-value between LBM and Hg is $0.72$).

\begin{figure}[ht]
\centering
\subfloat[]{\includegraphics[scale=0.62]{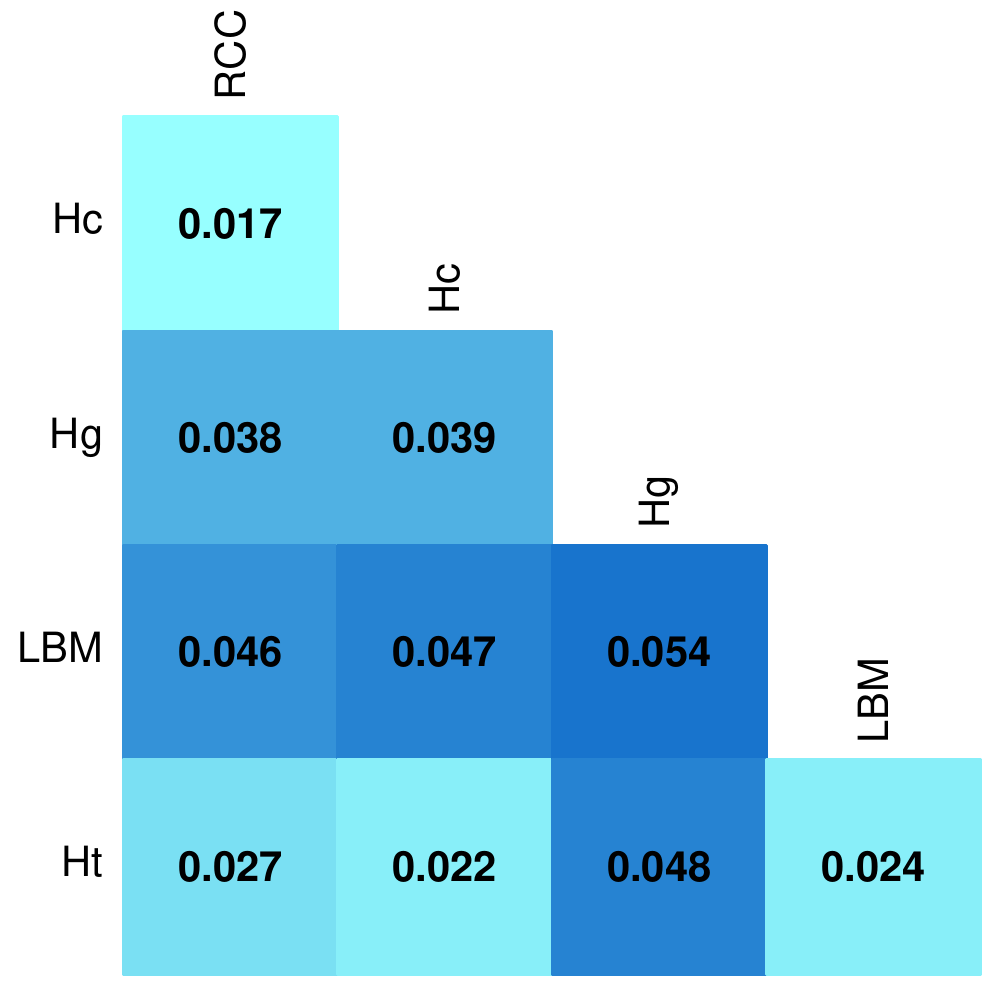}}
\subfloat[]{\includegraphics[scale=0.58]{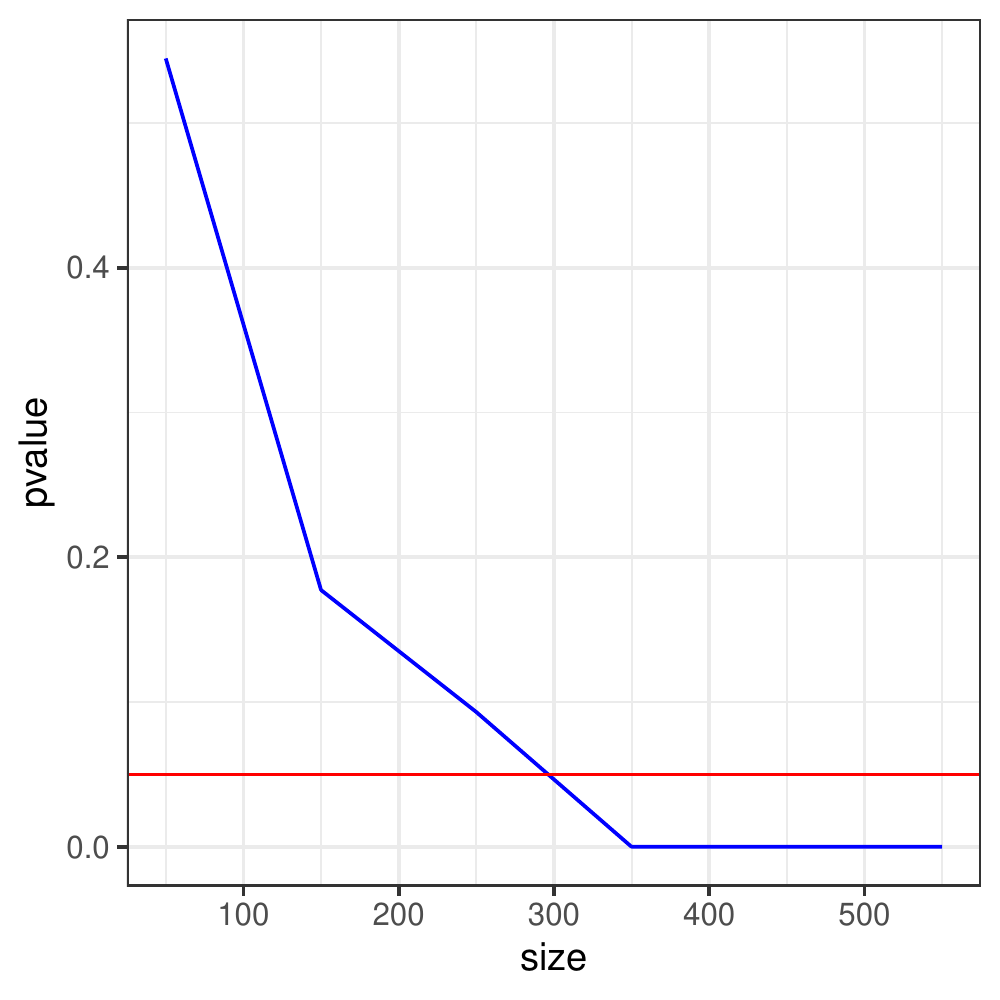}}
\caption{\small (A) Bivariate symmetry index values $S_n$ for all pairs of copulas among the 5 variables considered in the Biometric measurements. (B) P-values of the test (with statistic  $D_{n,\ell=50}$) for sub-sample of sizes  $n=50,150, \ldots 550$  in the Statlog Satellite dataset (horizontal red line $\alpha=0.05$).} 
\label{F:symIndex}
\end{figure}

We test the global five-dimensional symmetry of the copula. The components of each observation of the standardized sample are randomly permuted. From this permuted sample, we obtain the empirical distribution of the statistic $D_{n,\ell=50}$  under the exchangeability hypothesis (over 10,000 replicates). The p-value obtained for the sample is  $0.0126$. Therefore, as in \cite{BQ21}, the exchangeability hypothesis is rejected. 


\subsection{Statlog Satellite dataset}
 
The database consists of the multi-spectral values of pixels in $3\times3$ neighbourhoods in a satellite image, and is available in the UCI, Machine Learning Repository (\url{https://archive.ics.uci.edu/ml/machine-learning-databases/statlog/satimage/}) . The sample size is 6435 images. Each line contains the pixel values in the four spectral bands (converted to ASCII) of each of the 9 pixels in the $3\times3$ neighbourhood. That is, each observation is represented by $36$ attributes. In this example too the marginal distributions are clearly different, so, as in the previous example, the variables are standardized, and the statistic under the null hypothesis is determined to test exchangeability of the copula. The test  $D_{n,\ell=50}$ is performed for different sub-sample sizes $n=50,150, \ldots 550$  
(from the first to the n-th observation), similar to \cite{FGNV2012}. 
Figure~\ref{F:symIndex}(B)  displays the p-values  obtained for each~$n$. 
We observe that, with a sample size greater than $300$, the hypothesis of exchangeability is rejected.
 

\section{Conclusions and potential further developments}\label{S:conclusion}

We have shown how to exploit an extension of the Cram\'er--Wold theorem, Theorem~\ref{T:CWgen}, to use one-dimensional projections
to test for invariance of a probability measure under a group of linear transformations. As special cases, we obtain tests for
exchangeability and sign-invariant exchangeability.
The results in the simulations are really good, 
and the algorithms we propose are computationally extremely fast and adequate even for high-dimensional data. 
Regarding the comparison with the proposal in \cite{HS17}, 
Table \ref{Ta:pot} shows a relative efficiency of our method \emph{vis-\`a-vis} its competitor 
of order around 100\% for small values of the parameter~$\xi$. 
Moreover, the proposal in \cite{HS17} seems to be very hard to implement in high-dimensional spaces. 
We illustrate our method with a short study of two real-data examples.                                                                                     

As well as being effective, the methods developed in this article are quite flexible. 
Our results can be applied to the case of multivariate functional data, 
where we have vectors taking values in a Hilbert space or even a Banach space.
We can still employ the same algorithm as in \S\ref{SS:testing2}, 
using several random directions and finding the critical value by bootstrap,  
except that, in the infinite-dimensional case, we should choose the random directions 
(or the elements on the dual space) according to a non-degenerate gaussian measure. 

We have also shown that, even in the context of testing for $G$-invariance,
where one of the measures is a linear transformation of the other, 
Theorem~\ref{T:CWgen} remains sharp. This is the content of Theorem~\ref{T:sharpexch}.
However, if we have further \emph{a priori} knowledge of the distribution in question, then
Theorem~\ref{T:CWgen} can be improved.
For example, Heppes \cite[Theorem~$1'$]{He56} showed that,
if $P$ is discrete probability measures on $\RR^d$ supported on $k$ points,
and if  $Q$ is an arbitrary Borel probability measure on $\RR^d$ such that $\cE(P,Q)$ contains at least $(k+1)$ subspaces, no two of which are contained in any single hyperplane,
then $P=Q$. This opens the door to potential improvements of our procedures in the case where 
$P$ is known to be discrete, for example testing for the exchangeability of 
sequences of Bernoulli random variables.
This idea is explored further in \cite{FMR22}.

\section*{Acknowledgement}

Fraiman and Moreno  supported by grant FCE-1-2019-1-156054, Agencia Nacional de Investigaci\'on e Innovaci\'on, Uruguay. Ransford supported by grants from NSERC and the Canada Research Chairs program.

\bibliographystyle{plain}
\bibliography{biblist}

\end{document}